\documentclass{amsart}
\usepackage{amssymb,pgf,tikz}
\newcommand{\Sym}{\mathop{\mathrm{Sym}}}
\newcommand{\Alt}{\mathop{\mathrm{Alt}}}
\newtheorem{theorem}{Theorem}[section]
\newtheorem{corollary}[theorem]{Corollary}
\newtheorem{proposition}[theorem]{Proposition}
\newtheorem{lemma}[theorem]{Lemma}
\newtheorem{remark}[theorem]{Remark}
\newtheorem{question}[theorem]{Question}

\newtheorem{notation}[theorem]{Notation}

\renewcommand{\wr}{\mathop{\mathrm{wr}}}

\author[D.~Bubboloni]{Daniela Bubboloni}
\address{Daniela Bubboloni, Dipartimento di Scienze per l'Economia e l'Impresa,\newline
University of Firenze, \newline Via delle Pandette 9-D6,  50127 Firenze, Italy}
\email{daniela.bubboloni@unifi.it}

\author[C. E. Praeger]{Cheryl E. Praeger}
\address{Cheryl E. Praeger, Centre for Mathematics of Symmetry and Computation,\newline
School of Mathematics and Statistics,\newline
The University of Western Australia,\newline
 Crawley, WA 6009, Australia}\email{Cheryl.Praeger@uwa.edu.au}

\author[P. Spiga]{Pablo Spiga}
\address{Pablo Spiga,
Dipartimento di Matematica e Applicazioni,\newline
 University of Milano-Bicocca,\newline Via Cozzi 53, 20125 Milano, Italy
}
\email{pablo.spiga@unimib.it}

\thanks{
The first author is supported by the MIUR project ``Teoria dei gruppi
ed applicazioni (2009)".\\ The second author is supported by Australian Research
Council Federation Fellowship FF0776186; she is also affiliated with King Abdulazziz
University, Jeddah, Saudi Arabia.}

\subjclass[2000]{20B30}
\keywords{symmetric groups; conjugacy classes; normal coverings; generation of groups}

\begin{document}


\title[Normal coverings of symmetric groups]{Normal coverings and pairwise generation of finite alternating
and symmetric groups}

\begin{abstract}
The normal covering number $\gamma(G)$ of a finite, non-cyclic group $G$ is the least number
of proper subgroups  such that each element of $G$  lies in some conjugate
of one of these subgroups. We prove that there is a positive constant $c$ such that,
for $G$ a symmetric group $\Sym(n)$ or an alternating group $\Alt(n)$, $\gamma(G)\geq cn$.
This improves results of the first two authors who had earlier proved that
 $a\varphi(n)\leq\gamma(G)\leq 2n/3,$ for some positive constant $a$, where $\varphi$
 is the Euler totient function.
Bounds are also obtained for the maximum size  $\kappa(G)$ of a set $X$ of
conjugacy classes of $G=\Sym(n)$ or $\Alt(n)$ such that any pair of
elements from distinct classes in $X$ generates $G$, namely
$cn\leq \kappa(G)\leq 2n/3$.
\end{abstract}

\maketitle

\section{Introduction}\label{sec:intro}

Let $G$ be the symmetric group $\Sym(n)$ or the alternating group $\Alt(n)$ for $n\in\mathbb{N}.$ If
$H_1,\ldots,H_r$ are pairwise non-conjugate proper subgroups of $G$
with
$$
G=\bigcup_ {i=1}^r\bigcup_{g\in G}H_i^g,
$$
we say that $\Delta=\{H_i^g\,|\,1\leq i\leq r, g\in G\}$ is a {\em
  normal covering} of $G$ and that $\delta=\{H_1,\dots,H_r\}$ is a
{\em basic set} for $G$ which generates $\Delta.$ We call the elements
of $\Delta$ the {\em components}  and
the elements of $\delta$ the {\em basic components} of the normal covering.
In \cite{BP} the first two authors  introduced  the {\em normal
  covering number} of $G$ as
$$
\gamma(G):=\min\{|\delta|: \delta\  \hbox{is a basic set of} \ G\},
$$
and obtained lower and  upper estimates for $\gamma (G)$ as a
function of $n.$ They  found~\cite[Theorems~A and~B]{BP} that
$a\varphi(n)\leq \gamma (G)\leq
bn$  where $\varphi(n)$ is the
Euler totient function and $a, b$ are positive real constants depending on
whether $G$ is alternating or symmetric and whether $n$ is even or odd.
Note that $\gamma(\Sym(n))$, $\gamma(\Alt(n))$ are defined only for
$n\geq 3$ and $n\geq 4,$ respectively, since normal coverings do not exist for cyclic groups.
Our aim in this paper is to improve significantly the lower bound in \cite{BP}
by establishing a lower bound linear in $n$. 

In addition we investigate a second parameter  $\kappa(G)$, of a finite group $G$, introduced
very recently by Britnell and Mar\'oti~\cite{BM}: $\kappa(G)$ is the maximum size (possibly zero) of a set $X$
of conjugacy classes of $G$ such that any pair of elements from distinct classes in $X$
generates $G$. Such a set $X$ is called an \emph{independent set} of classes. From this
definition, it is clear  that
\begin{equation}\label{kappa-gamma}
\kappa(G)\leq \gamma(G)
\end{equation}
for every finite non-cyclic group $G$.

Consider now the groups $G=\Sym(n)$ or $G=\Alt(n).$
It is known that $\kappa(\Alt(n)\geq 2$ for all $n\geq 5$ (see Proposition 7.4 in \cite{GM}) and the same inequality holds trivially for $3\leq n\leq 4.$ In Proposition \ref{kappasym}, we show that also
$\kappa(\Sym(n))\geq 2$ for $n\geq 2,\ n\neq 6.$ The case $\Sym(6)$ is a genuine exception since $\kappa(\Sym(6))=0.$
Note also that, since $\gamma(\Sym(6))=2$, the functions $\kappa$ and $\gamma$ are different.

Our main
result says that, in the non-cyclic case, both parameters $\gamma(G)$ and $\kappa(G)$ grow linearly with the degree
$n$ of $G$ in the following sense.

\begin{theorem}\label{thm3}
There exists a positive real constant 
$c$ such that for  $G=\Sym(n)$ with $n\geq 3, n\neq 6$, or $G=\Alt(n)$ with $n\geq4$, we have
$$
cn \leq \kappa(G)\leq \gamma(G) \leq \frac{2}{3} n.
$$
\end{theorem}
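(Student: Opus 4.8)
**The plan is to establish the three inequalities in Theorem~\ref{thm3} separately, with the lower bound $cn \leq \kappa(G)$ being the genuinely new contribution.** The chain $\kappa(G) \leq \gamma(G)$ is already noted in~\eqref{kappa-gamma}, so nothing new is needed there. The upper bound $\gamma(G) \leq \frac{2}{3}n$ is quoted directly from \cite[Theorems~A and~B]{BP} (the constant $b$ there can be taken to be $2/3$ in all four cases of parity and group type), so I would simply invoke that result, perhaps after a brief remark confirming the constant. Hence essentially all the work goes into producing an explicit large independent set of conjugacy classes.

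**For the lower bound $\kappa(G) \geq cn$, the strategy is to exhibit roughly $cn$ conjugacy classes, pairwise ``independent'' in the sense that any two elements chosen from two distinct such classes generate $G$.** The natural candidates are classes of elements with cycle type controlled by primes: for a prime $p$ in a suitable range (say $n/2 < p \leq n$, or more generally primes $p$ with $p \leq n < 2p$ so that a $p$-cycle times a fixed-point structure is forced to be "primitive-looking"), take the class $C_p$ of a permutation that is a product of a single $p$-cycle and fixes the remaining $n-p$ points (adjusting by a transposition to land in $\Alt(n)$ or $\Sym(n)$ as parity demands). An element of such a class has order divisible by $p$ and, crucially, the $p$-cycle support forces any subgroup containing it to act with a large orbit; classical results of Jordan and the O'Nan--Scott analysis of primitive groups then show that a subgroup generated by two elements from different such classes, if proper, must be contained in a primitive group of degree $n$ containing a $p$-cycle, which (again by Jordan/Manning/Williamson-type theorems) forces it to be $\Alt(n)$ or $\Sym(n)$ unless $p$ is very close to $n$. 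To get linearly many classes one needs the number of primes in an interval like $(n/2, n - n/\log n)$ or $((1-\epsilon)n, n)$ to be $\geq cn$; but the number of primes up to $n$ is only $\sim n/\log n$, so one cannot rely on primes alone. Instead I would use prime \emph{powers} or, better, products of exactly two primes, or elements whose cycle lengths are $\{k, n-k\}$ or a single cycle of length a prime power --- anything giving $\Theta(n)$ distinct cycle types each forcing primitivity and then full symmetry.

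**The cleanest route, and the one I would actually pursue, is to use classes indexed by an integer parameter rather than by primes.** For each $k$ with, say, $n/3 \le k \le n/2$ and $\gcd(k, n) = 1$ suitably arranged, consider the class of a permutation with cycle type $(k, n-k)$ (two cycles of coprime lengths). Two such elements with parameters $k \ne k'$ generate a transitive group (by coprimality and a connectedness argument on supports if the supports are arranged to overlap), in fact a primitive one, and then a Jordan-type theorem (a primitive group containing a cycle of prime-power length $\geq 3$ fixing at least $3$ points is $\Alt$ or $\Sym$; more relevantly here, results on primitive groups containing an element with few cycles) pins it down to $G$. The number of valid $k$ in $[n/3, n/2]$ is $\Theta(n)$, giving the linear bound. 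For the small and exceptional cases ($n \le$ some bound, $\Sym(6)$) one checks by hand or computer; Proposition~\ref{kappasym} already handles $\kappa(\Sym(n)) \ge 2$ and disposes of $\Sym(6)$.

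**The main obstacle is the pairwise-generation step: proving that two permutations of the chosen cycle types, lying in distinct classes, actually generate all of $G$ and not merely a transitive or primitive proper subgroup.** This requires a careful case analysis using the classification of maximal subgroups of $\Sym(n)$ and $\Alt(n)$ (intransitive, imprimitive, affine, diagonal, product, almost simple) --- ruling out each type by an order or support argument exploiting the two different cycle structures --- together with the Jordan-type primitivity-to-full-symmetry results. Making the constant $c$ explicit and uniform across the four cases (symmetric/alternating, $n$ even/odd) is the secondary technical burden, but it reduces to counting admissible parameters $k$ and is routine once the generation lemma is in place.
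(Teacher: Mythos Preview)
Your overall structure is right—the upper bound and the middle inequality are as you say—but your ``cleanest route'' for the lower bound contains a genuine counting error that prevents it from giving a linear bound.

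You propose to use cycle types $(k,n-k)$ with $\gcd(k,n)=1$ and $k\in[n/3,n/2]$, and you assert that the number of valid $k$ is $\Theta(n)$. It is not: the number of integers in an interval of length $\Theta(n)$ that are coprime to $n$ is $\Theta(\varphi(n))$, and $\varphi(n)/n$ can be as small as $c/\log\log n$ (take $n$ a primorial). So your construction yields only $\kappa(G)\geq c\,\varphi(n)$, which is precisely the bound already obtained in~\cite{BP} and which the present paper is written to improve. You cannot simply drop the coprimality condition either: if $d>1$ divides both $k$ and $n-k$, then type $(k,n-k)$ lies in the imprimitive subgroup $\Sym(d)\wr\Sym(n/d)$, and many different such types lie in the \emph{same} imprimitive subgroup, destroying pairwise independence. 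There is also a parity obstruction you have not handled: when $n$ is even, every permutation of type $(k,n-k)$ is even, so any two of them generate at most $\Alt(n)<\Sym(n)$, and the classes are not independent in $\Sym(n)$.

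The paper's way past this is its central new idea. Instead of $2$-partitions it works with $k$-partitions for $k\in\{3,4\}$ (the choice dictated by the parity of $n$ and the group), using the set $\mathfrak{T}_k(n)$ of~\eqref{eq2}. The decisive input is the number-theoretic result~\cite[Theorem~2]{BLS}: for $k\geq 3$ the number of $k$-compositions of $n$ with one part coprime to the others has order $n^{k-1}$, whereas the case $k=2$ is anomalous and collapses to $\varphi(n)$. One then shows that these types avoid primitive proper subgroups (Lemma~\ref{evenodd}, via Marggraf), that only $O(n^{k-3/2})$ of them lie in any imprimitive subgroup (Lemma~\ref{lemma2}), and that each maximal intransitive subgroup contains only $O(n^{k-2})$ of them (Lemma~\ref{lemma6}); a bipartite-graph argument (Lemma~\ref{newkappa}) then extracts $\Theta(n)$ pairwise-independent classes. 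Your transitive/imprimitive/primitive case analysis is of the right shape, but it must be carried out for these $3$- and $4$-part types, not $2$-part types, and the linear count depends essentially on the result from~\cite{BLS}.
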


The upper bound in Theorem~\ref{thm3} is proved in \cite[Theorem A and Table 3]{BP};
our task here is to prove the lower bound. Our proof give a less than optimal value for $c$ since we choose to avoid the classification of the finite simple groups. Thus this
result raises naturally a question about the constants. We note that
$\gamma(\Sym(3))=2$ so the upper bound is sharp. Also $\kappa(\Sym(4))=\kappa(\Alt(4))=2$, so the constant $c\leq 1/2$.
\begin{question}
What is the best positive real constant $c$ such that $\kappa(G)\geq cn$, for $G=\Sym(n)$ or $\Alt(n)$ and all $n\in\mathbb{N}$ with $n\geq 3$ and $(n,G)\neq (6, \Sym(6))$? 
\end{question}
Our proof strategy is to demonstrate that  any normal covering consisting of maximal subgroups must
contain sufficiently many maximal intransitive subgroups, and that any maximal independent set of conjugacy
classes must contain sufficiently many classes from a specified set in which every maximal subgroup,
containing an element from one of these classes, is intransitive. A more explicit lower bound is
given in Theorem~\ref{main1} below.
To state this result we introduce the two real numbers:
\begin{equation}\label{eq1}
\alpha:=\prod_{p\,\mathrm{prime}} \left(1-\frac{2}{p^{2}}\right)\quad\textrm{and}\quad
\beta:=\prod_{p\,\mathrm{prime}} \left(1-\frac{3p-2}{p^{3}}
\right).
\end{equation}
\begin{theorem}\label{main1}
For every $\varepsilon\in\mathbb{R}$ with
$\varepsilon>0$, there exists  $n_\varepsilon\in\mathbb{N}$ such that,
for every $n\in \mathbb{N}$ with $n\geq n_\varepsilon$, we have
\[
\frac{\kappa(\Sym(n))}{n}\geq \left\{
\begin{array}{ccl}
\displaystyle{\frac{\alpha}{12}}-\varepsilon&&\textrm{if }n \textrm{ is even},\\
&&\\
\displaystyle{\frac{\beta}{120}}-\varepsilon&&\textrm{if }n \textrm{ is odd,}\\
\end{array}\right.\]
\[
\frac{\kappa(\Alt(n))}{n}\geq \left\{
\begin{array}{ccl}\displaystyle{\frac{\beta}{150}}-\varepsilon&&\textrm{if }n
  \textrm{ is even},\\
&&\\
\displaystyle{\frac{\alpha}{18}}-\varepsilon&&\textrm{if }n \textrm{ is odd}.\\
\end{array}\right.
\]
Moreover, we have $\alpha>0.32263$ and $\beta>0.28665$.
\end{theorem}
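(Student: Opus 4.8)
The plan is to deduce the four lower bounds in Theorem~\ref{main1} from a single construction: for every large $n$ and each of the four cases ($G=\Sym(n)$ or $\Alt(n)$, with $n$ even or odd) I would produce an \emph{independent} set $S_n$ of conjugacy classes of $G$ with $|S_n|\ge (c_G-\varepsilon)n$, where $c_G$ is the relevant one of $\alpha/12,\beta/120,\beta/150,\alpha/18$. By definition $S_n$ is independent exactly when no two elements from distinct classes of $S_n$ share a maximal subgroup, whence they generate $G$ and $\kappa(G)\ge |S_n|$; the inequality $\kappa(G)\le\gamma(G)$ is already \eqref{kappa-gamma}. The engine is the following pair of properties of the classes I put into $S_n$: (i) every maximal subgroup of $G$ meeting such a class is \emph{intransitive}, i.e.\ conjugate to $(\Sym(k)\times\Sym(n-k))\cap G$ for some $1\le k<n/2$; and (ii) distinct classes have disjoint \emph{splitting data}, the splitting datum of a class of cycle type $\lambda$ being the set of proper partial sums of $\lambda$. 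Since $(\Sym(k)\times\Sym(n-k))\cap G$ meets the class of type $\lambda$ precisely when $k$ is a partial sum of $\lambda$, (i) and (ii) together rule out any common maximal subgroup, taking care in the alternating case to include at most one of any pair of classes fused in $\Sym(n)$.

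For the construction I would use classes whose cycle type is one short cycle of length $2$ or $3$ together with one or two long cycles, the natural shapes being $(2,a,b)$ for $\Sym(n)$ with $n$ even, $(2,a,b,c)$ for $\Sym(n)$ with $n$ odd, $(3,a,b)$ for $\Alt(n)$ with $n$ odd, and $(3,a,b,c)$ for $\Alt(n)$ with $n$ even; here the short cycle and the parities of the long cycles are arranged so that the element lies in $G$ (a two-part type is always even, which already forces a third and sometimes a fourth moving part in half the cases), and the long lengths are taken odd (for the $2$-cycle cases) or coprime to $3$ (for the $3$-cycle cases) so that a suitable power of the element is a single transposition, respectively a single $3$-cycle. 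Property (i) is then CFSG-free: any primitive maximal overgroup would, through that power, contain a transposition or a $3$-cycle, hence would contain $\Alt(n)$ by Jordan's theorem and so fail to be proper; and an imprimitive maximal overgroup $\Sym(k)\wr\Sym(n/k)$ containing a fixed-point-free element of our type would force the cycle lengths to be aligned with the block size $k\mid n$, which is excluded by divisibility and coprimality conditions relating the long lengths (and their sums with the short one) to $n$. These last conditions are the source of the Euler products in \eqref{eq1}: imposing them on the free parameter(s) amounts, prime by prime, to avoiding $2$ residues modulo $p^2$ (one long cycle, giving $\prod_p(1-2/p^2)=\alpha$) or $3p-2$ residues modulo $p^3$ (two long cycles, giving $\prod_p(1-(3p-2)/p^3)=\beta$).

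It remains to count $|S_n|$. The free parameter(s) run over an interval of length $\sim n$, subject to fixing residues modulo $2$ and the squarefreeness/coprimality conditions above; a sieve (inclusion--exclusion over prime squares, with an error term that is $o(n)$ uniformly in $n$) shows that the density of admissible parameters tends to $\alpha$ in the one-parameter cases and to $\beta$ in the two-parameter cases. Since the splitting datum of each of our classes has at most six elements, all in $[1,n]$, restricting the parameters to a suitably sparse arithmetic family makes the splitting data pairwise disjoint while costing only a bounded factor; tracking all the bounded factors (parity restrictions, the restriction to a half-interval, unordering the cycle lengths, the coprimality at small primes, and the disjointness factor) produces exactly the denominators $12,120,150,18$, up to the additive $\varepsilon$. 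Finally, $\alpha>0.32263$ and $\beta>0.28665$ follow by evaluating the two products over the primes up to a modest bound $P$ and bounding the tails from below, using $\prod_{p>P}(1-2/p^2)\ge 1-\sum_{m>P}2/m^2$ and $\prod_{p>P}(1-(3p-2)/p^3)\ge 1-\sum_{m>P}3/m^2$.

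The main obstacle is the tension among properties (i), (ii) and the size requirement. Forcing (i) without the classification compels the use of short distinguished cycles and of strong coprimality, both of which eat into the parameter count, and in the two cases where parity requires three or four moving parts both the imprimitivity exclusion and the disjointness of splitting data become genuinely delicate; quantitatively, the real work is the uniform-in-$n$ sieve estimate identifying the density of admissible parameters with the Euler products $\alpha$ and $\beta$, together with the careful bookkeeping that makes the accumulated constant factors combine to the stated denominators, and the verification that the finitely many small values of $n$ can be absorbed into $n_\varepsilon$.
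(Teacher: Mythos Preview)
Your construction has a fatal flaw at step~(ii). If every class you choose has cycle type $(2,a,b)$ (or $(3,a,b)$, etc.) with the short cycle length \emph{fixed}, then the integer $2$ (respectively $3$) is a proper partial sum of \emph{every} such type. Concretely, for any two types $(2,a,b)$ and $(2,a',b')$ you can place both $2$-cycles on $\{1,2\}$, so both conjugacy classes meet $\Sym(\{1,2\})\times\Sym(\{3,\dots,n\})$; hence there exist representatives $\sigma,\tau$ with $\langle\sigma,\tau\rangle$ intransitive, and the pair of classes is \emph{not} independent. No ``sparse arithmetic family'' can make the splitting data pairwise disjoint when they all contain the same fixed value. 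As stated, your family yields $\kappa(G)\ge 1$, nothing more. The same objection kills all four cases $(2,a,b)$, $(2,a,b,c)$, $(3,a,b)$, $(3,a,b,c)$.

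There is a second, related problem: your one-parameter families have size $O(n)$, and you then propose to thin them by a bounded factor to achieve disjoint splitting data. Even ignoring the obstruction above, a family of size $cn$ in which each member has a splitting datum of bounded size can share a given value $k\in[1,n/2]$ with up to $O(1)$ other members \emph{only if the values are spread out}; but the constraint $a+b=n-2$ forces strong correlations among the partial sums $a,b,a+2,b+2$, and the bookkeeping that allegedly produces the exact denominators $12,18,120,150$ is never carried out. Finally, the identification of your sieve density with $\alpha=\prod_p(1-2/p^2)$ is unjustified: in the paper (via \cite{BLS}) this constant arises from counting the \emph{two-parameter} family of $3$-compositions $(x_1,x_2,x_3)$ with $\gcd(x_1,x_2x_3)=1$, not from forbidding two residue classes modulo $p^2$ in a single parameter.

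The paper's approach is structurally different and avoids all of this. It works with the entire $(k-1)$-dimensional set $\mathfrak{T}_k(n)$ of $k$-partitions having \emph{some} term $x_i$ with $2\le x_i<n/2$ coprime to the rest (Marggraf's theorem, not Jordan's, excludes primitive overgroups). The point is that $|\mathfrak{T}_3(n)|$ has order $n^2$ and $|\mathfrak{T}_4(n)|$ has order $n^3$; the constants $C_2=\alpha$ and $C_3\alpha'=\beta$ enter through the asymptotics for $|\mathcal{A}_{k-1}(n)|$ proved in \cite{BLS}. One then shows that each maximal intransitive subgroup contains at most $n/2$ (for $k=3$) or about $5n^2/48$ (for $k=4$) of these types. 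A greedy argument on the bipartite incidence graph between types and intransitive maximals extracts $\ge \frac{2}{3n}\widetilde{A}_3(n,G)$, respectively $\ge \frac{24}{25n^2}\widetilde{A}_4(n,G)$, types at pairwise distance $>2$; these give the independent set. The denominators $12,18,120,150$ come from this division combined with the lower bounds $f_2(n)\ge 3/2$ or $1$ and $f_3(n)\ge \alpha'$ or $\tfrac{5}{4}\alpha'$, not from your bounded-factor bookkeeping. To repair your argument you would at minimum need to let the short cycle length vary over a range of size $\Theta(n)$, at which point you are essentially reinventing $\mathfrak{T}_k(n)$ and need the two-dimensional count from \cite{BLS} anyway.
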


In Remark~\ref{computational} we give a computational version of
Theorem~\ref{main1}, with an explicit value for $n_\varepsilon$, and
we describe a general method for obtaining some estimates on $n_\varepsilon$
given a fixed $\varepsilon$.
In the proof of Theorem~\ref{main1} we follow and
deeply strengthen some ideas in~\cite{BP}, through a recent
number theoretic result~\cite[Theorem~$2$]{BLS}.
Theorem~\ref{thm3} follows  immediately from
Theorem~\ref{main1}.

\begin{remark}{\rm
Our results also give  insight into a further parameter $\mu(G)$
defined on finite groups $G$ by Blackburn in \cite{Bl}: the
quantity $\mu(G)$ is the maximum size of a subset $X$ of $G$ such
that for distinct $g, h \in X$, we have  $G=\langle g, h\rangle.$
It follows immediately from this definition that $\mu(G)\geq
\kappa(G)$, and on the other hand $\mu(G)$ is at most the covering
number $\sigma(G)$ (the minimum number of proper subgroups of $G$
whose union is equal to $G$). Thus $\kappa(G)\leq\mu(G)\leq\sigma(G)$
for all finite non-cyclic groups $G$. Moreover Blackburn~\cite{Bl}
asked whether $\sigma(G)/\mu(G)\rightarrow 1$ for
finite simple groups $G$ as the order of $G$ tends to infinity.

In~\cite{Bl} Blackburn used probabilistic methods to prove that
$\mu(\Sym(n))=2^{n-1}$ for  sufficiently large odd $n$, and
$\mu(\Alt(n))=2^{n-2}$ for sufficiently large $n\equiv 2\pmod{4}$;
and in these cases the parameter $\mu(G)$ is equal to $\sigma(G)$
by a result of Mar\'oti~\cite[Theorem 1.1]{M}. To our knowledge
the value of $\mu(G)$, when $G=\Sym(n)$ with $n$ even or $G=\Alt(n)$
with $n\not\equiv 2\pmod{4}$, is  not known. It follows from
Theorem~\ref{thm3} that $\mu(G)$ grows at least linearly in $n$ in
these cases. We suspect that this is far from the truth in the light of Blackburn's question mentioned in the previous paragraph,
and Mar\'oti's results \cite[Theorems~3.2,~4.1,~4.2,~4.4]{M}, that
$\sigma(G)$ grows at least exponentially with $n$ (as long as  $G$ is not $\Alt(n)$ with
$n=(q^k-1)/(q-1)$ for some $q$ prime and $k\in\mathbb{N}$).
}
\end{remark}

In the rest of this introductory section, we set some notation and definitions that we
will use throughout the rest of the paper and we give some insights in
how~\cite[Theorem~$2$]{BLS} is used to investigate $\gamma(G)$ and $\kappa(G)$.
From now on $G$ will denote always the group $\Sym(n),$ with $n\geq 3$ or the group $\Alt(n),$ with $n\geq 4,$ $n\in\mathbb{N}.$

\subsection{Partitions and permutations}
\label{part-perm}
Let $n,k\in \mathbb{N},$  with $n\geq k.$ A $k$-{\em
  composition} of $n$ is an ordered $k$-tuple
$(x_1,\dots,x_k)\in\mathbb{N}^{k}$ such that $n=\sum_{i=1}^{k}x_i.$ A
$k$-{\em partition} of $n$ is an unordered $k$-tuple
$[x_1,\dots,x_k]$, with $x_i\in\mathbb{N}$ for each $i\in
\{1,\dots,k\},$ such that $n=\sum_{i=1}^{k}x_i.$
The $x_i$s are called the {\em terms} of the composition or of the partition.
We denote with
\begin{eqnarray}
\label{Kappa}
\mathcal{K}_k(n)&:=&\{(x_1,x_2,\dots,x_{k})\in\mathbb{N}^{k}
:\  n=\sum_{i=1}^{k}x_i\},\\
\label{P}
\mathfrak{P}_k(n)&:=&\{[x_1,x_2,\dots,x_{k}]
:\ x_i\in\mathbb{N}\  \hbox{for}\  i=1,\dots,k,\  n=\sum_{i=1}^{k}x_i\}
\end{eqnarray}
respectively, the set of  $k$-compositions and the set of
$k$-partitions of $n.$ Moreover we call each element in  $\mathfrak{P}(n)=\bigcup_{k=1}^n\mathfrak{P}_k(n)$ a {\em partition} of $n.$

Given a permutation $\sigma\in \Sym(n)$ which decomposes as a
product of $k$ pairwise disjoint cycles of lengths $x_1,\dots, x_k,$ we
associate with $\sigma$ the $k$-partition
$\mathfrak{p}(\sigma)=[x_1,\dots, x_k]\in \mathfrak{P}(n)$ which we
call the {\em type} of $\sigma.$ Note that  the map
$$\mathfrak{p}:\Sym(n)\rightarrow \mathfrak{P}(n)$$ is
surjective, that is, each partition of $n$ may be viewed as the type of
some permutation. Note also that $\mathfrak{p}(\Alt(n))\subsetneqq
\mathfrak{P}(n)$, for every $n\geq 2.$

If $H\leq \Sym(n)$ we say that the type $ \mathfrak{p}\in
\mathfrak{P}(n)$ {\em belongs to} $H$ or that $H$ {\em contains} $ \mathfrak{p}$
 if $ \mathfrak{p}\in \mathfrak{p}(H),$ that is, $H$ contains a
 permutation of type $\mathfrak{p}.$

Observe that two permutations are conjugate in  $\Sym(n)$ if and only if they
have the same type, and the set of permutations of $\Alt(n)$ with the same type splits into at most two conjugacy
 classes of $\Alt(n)$. This conjugacy class splitting will not be a problem for our analysis.
Observe also that we can always replace a normal covering of $G=\Sym(n)$
or $\Alt(n)$ by one with the same
number of basic components in which each component is a maximal
subgroup of $G$. For this reason we can assume (and we will) that each
component is a maximal subgroup of $G$, that is to say, in
its action on $\{1,\ldots,n\}$, it is  a maximal intransitive, or
imprimitive, or primitive subgroup of $G$. Similarly, when seeking
maximal independent sets of conjugacy classes of $G$, we
are concerned with the sets of types belonging to maximal intransitive,
imprimitive, and primitive subgroups of $G$.\\


\subsection{Order of magnitude}\label{magnitude}  We introduce now the notion of order of magnitude, which we will use throughout the paper to get an immediate incisive relation between the sizes of various sets we deal with. This is a standard notion in computer science.

A subset  $D$ of $\mathbb{N},$ is called an {\it unbounded domain} of $\mathbb{N}$ if for each $k\in \mathbb{N},$ there exists $n_k\in D$ with $n_k\geq k.$
Given an unbounded domain $D\subseteq \mathbb{N}$ we  consider the set of real functions on $D$
$$
\mathcal{F}_D=\{f:D\rightarrow\mathbb{R} \}
$$
and the set of positive real functions on $D$
$$
\mathcal{F}_D^{> 0}=\{f:D\rightarrow\mathbb{R}^{> 0}\}.
$$
\begin{itemize}
\item[a)] For $f,g\in \mathcal{F}_D,$ we write $f(n)=o(g(n))$ if $\displaystyle{\lim_{n\rightarrow +\infty}\frac{f(n)}{g(n)}=0}$. If $f,g\in \mathcal{F}_D^{> 0}$ and $f(n)=o(g(n)),$ we say that
$f$ has {\it order of magnitude less than} $g$ or that $g$ has {\it order of magnitude greater than} $f$;

\item[b)] For $f,g\in \mathcal{F}_D^{> 0},$ we say that $f$ has the {\it same order of magnitude} as $g$  if there exist $a,b\in\mathbb{R}^{> 0}$ such that $$a\leq \frac{f(n)}{g(n)}\leq b, \qquad \text{for all}\ n\in D. $$
If the first inequality holds we say that $f$ has {\it order of magnitude at least} $g$; if the second inequality holds we say that $f$ has {\it order of magnitude at most} $g.$
\end{itemize}

Since the functions in $\mathcal{F}_D^{> 0}$ are positive, the inequalities in b) hold for all $n\in D$ if and only if they  hold (for possibly different positive $a, b$) for all $n\in D$ with $n\geq n_0,$ for some fixed $n_0$. Moreover it is immediate to check that to have the same order of magnitude is an equivalence relation on $\mathcal{F}_D^{> 0}$.

For  a finite number $D_1,\dots,D_m$ of unbounded domains of $\mathbb{N}$, if  $D=\cap_{i=1}^m D_i$ is also an unbounded domain of $\mathbb{N}$, then we may view the functions
$f_i\in \mathcal{F}_{D_i}$  as elements of $ \mathcal{F}_{D}$ and apply there the various notions of order of magnitude.

A relevant case  in which two functions have the same order of magnitude is the following.
	\begin{lemma}\label{o} Let  $f,g\in \mathcal{F}_D^{> 0}.$ If there exist $a,b\in\mathbb{R}^{> 0}$ and $\gamma_1, \gamma_2\in\mathcal{F}_D$ with both $\gamma_1(n)= o(g(n))$ and $\gamma_2(n)= o(g(n))$ such that  $$ag(n)+\gamma_1(n)\leq f(n)\leq bg(n)+\gamma_2(n), \qquad \text{for all}\ n\in D, $$ then $f$ has the same order of magnitude as $g.$
	\end{lemma}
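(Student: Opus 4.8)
The plan is simply to divide the hypothesised two-sided estimate through by the positive function $g(n)$ and to absorb the error terms $\gamma_1,\gamma_2$ using the definition of $o(\cdot)$. Dividing $ag(n)+\gamma_1(n)\leq f(n)\leq bg(n)+\gamma_2(n)$ by $g(n)>0$ gives
$$
a+\frac{\gamma_1(n)}{g(n)}\ \leq\ \frac{f(n)}{g(n)}\ \leq\ b+\frac{\gamma_2(n)}{g(n)},\qquad\text{for all}\ n\in D.
$$
Since $\gamma_1(n)=o(g(n))$ and $\gamma_2(n)=o(g(n))$, both ratios $\gamma_1(n)/g(n)$ and $\gamma_2(n)/g(n)$ tend to $0$ as $n\rightarrow +\infty$; in particular there is $n_0\in\mathbb{N}$ (take the larger of two thresholds) such that $|\gamma_1(n)/g(n)|\leq a/2$ and $|\gamma_2(n)/g(n)|\leq 1$ for every $n\in D$ with $n\geq n_0$. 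For such $n$ we then obtain
$$
0<\frac{a}{2}\ \leq\ \frac{f(n)}{g(n)}\ \leq\ b+1.
$$

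It remains to pass from a bound valid for all sufficiently large $n\in D$ to one valid for all $n\in D$. This is exactly the observation recorded in the paragraph preceding the statement: because $f$ and $g$ are strictly positive on $D$, a bound of the form $a'\leq f(n)/g(n)\leq b'$ holding for all $n\in D$ with $n\geq n_0$ extends to all of $D$ upon replacing $a'$ by $\min\{a',\ \min_{n\in D,\,n<n_0} f(n)/g(n)\}>0$ and $b'$ by $\max\{b',\ \max_{n\in D,\,n<n_0} f(n)/g(n)\}$, these minima and maxima being over a finite set of positive reals. Hence $a''\leq f(n)/g(n)\leq b''$ for all $n\in D$ with suitable constants $a'',b''\in\mathbb{R}^{> 0}$, which by definition says that $f$ has the same order of magnitude as $g$.

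There is essentially no obstacle in this argument; the only points requiring a little care are to choose the error bounds strictly smaller than the leading constant $a$ on the left (so that the resulting lower bound $a/2$ is genuinely positive, not merely nonnegative), and to note that the reduction to large $n$ is legitimate precisely because $f,g\in\mathcal{F}_D^{> 0}$, so the finitely many omitted quotients $f(n)/g(n)$ are positive real numbers that can be harmlessly incorporated into the constants.
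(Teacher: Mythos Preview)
Your proof is correct and follows essentially the same approach as the paper's: divide through by $g(n)$, use the $o(g(n))$ hypothesis to bound the error ratios by a quantity strictly smaller than $a$ (you use $a/2$, the paper uses a generic $\bar{\varepsilon}<a$), and then invoke the observation that positivity of $f,g$ lets one extend a bound valid for large $n$ to all of $D$. The only cosmetic difference is that you spell out the finite min/max argument for the extension, whereas the paper simply refers back to its earlier remark.
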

	\begin{proof}  Since $g(n)>0,$ by the assumption we get
$$
a+\frac{\gamma_1(n)}{g(n)}\leq \frac{f(n)}{g(n)}\leq b+\frac{\gamma_2(n)}{g(n)}, \qquad \text{for all}\ n\in D.
$$
Fix an $\overline{\epsilon}$ with $0<\overline{\epsilon}<a.$ By assumption, $\displaystyle{\lim_{n\rightarrow +\infty}\frac{\gamma_1(n)}{g(n)}}=\displaystyle{\lim_{n\rightarrow +\infty}\frac{\gamma_2(n)}{g(n)}= 0},$
and hence there exists $\overline{n}\in\mathbb{N}$ such that
$$
\frac{\gamma_1(n)}{g(n)}>-\overline{\epsilon}\quad \mbox{and} \quad \frac{\gamma_2(n)}{g(n)}<\overline{\epsilon}
$$
for all $\  n\in D$ with $  n\geq \overline{n}.$ Thus
$$
0<a-\overline{\epsilon}\leq \frac{f(n)}{g(n)}\leq b+\overline{\epsilon},
$$
for all $n\in D $ with $n\geq \overline{n}.$
By the observation above, $f$ has the same order of magnitude as $g.$
	\end{proof}

\subsection{The idea of the proof of Theorem \ref{main1}}
\label{idea}
We need two more definitions before we can discuss the idea
of our proof.
For every $n,k \in\mathbb{N}$ with $n\geq k\geq 2,$ define
\begin{eqnarray}\label{eq2}
\mathfrak{T}_{k}(n):=\{[x_1,\dots, x_{k}]
\in\mathfrak{P}_{k}(n)&:&\exists\, i\in\{1,\dots,k\}\,  \hbox{with }\\\nonumber
&&  2\leq x_i< n/2,\,\gcd(x_i,\prod_{\substack{j=1\\j\neq i}}^{k}
x_j)=1\},
\end{eqnarray}
\begin{eqnarray}\label{eq3}
\hspace{-1.3cm}\mathcal{A}_{k-1}(n):=\{(x_1,x_2,\dots, x_{k})
\in\mathcal{K}_{k}(n)&:& \gcd(x_1,\prod_{j=2}^{k} x_j)=1\}.
\end{eqnarray}
The way $|\mathfrak{T}_{k}(n)|$ grows as a function of $n$ can
be easily derived from studying $|\mathcal{A}_{k-1}(n)|,$ because these two quantities have the same order of magnitude (see Theorem~\ref{BLS} and Corollary~\ref{Torder2}).

Moreover it was shown recently by two of the authors
with Luca~\cite[Theorem ~$2$]{BLS} that, for each $k\geq 3$,
$|\mathcal{A}_{k-1}(n)|$ has the same order of magnitude  as
$n^{k-1}$ while, anomalously, $|\mathcal{A}_{1}(n)|=\varphi(n)$ does not have the same order of magnitude as $n.$

Assume that $k$ is odd if $G=\Sym(n)$ and $n$ is even or if $G=\Alt(n)$
and $n$ is odd, and assume that $k$ is even if $G=\Sym(n)$ and $n$ is odd or if
$G=\Alt(n)$ and $n$ is even. Then for each type $\mathfrak{p}$ in
$\mathfrak{T}_k(n)$, every basic set of $G$ must contain a
component containing $\mathfrak{p}.$

 The types in $\mathfrak{T}_{k}(n)$ turn out to be very important for many reasons.
Firstly, no primitive proper subgroup of $G$ can contain elements from
$\mathfrak{T}_k(n)$ (see Lemma~\ref{evenodd}), so to cover these types
we need maximal intransitive subgroups or maximal imprimitive subgroups.
Secondly, choosing $k\in \{3,4\}$, it turns out that very few  types in
$\mathfrak{T}_{k}(n)$
belong to imprimitive subgroups, in the sense that their number has
 an order of magnitude less than $n^{k-1}$ (see Lemma \ref{lemma2}).
Finally for $k\in \{3,4\},$ each intransitive
subgroup can contain types from $\mathfrak{T}_{k}(n)$ within an order of
magnitude at most $n^{k-2}$ (see Lemma~\ref{lemma6}). It follows that the number of
intransitive basic components in any
normal covering of $G$ must be at least linear in $n$:
reaching in this way the required lower bound for
$\gamma(G)$. We require a slightly more refined argument to get a
lower bound for $\kappa(G)$.

Observe that our choice $k\in\{3,4\}$ is in a sense arbitrary. It is made on the one hand to guarantee control of the imprimitive and intransitive components (see Lemmas \ref{lemma2} and \ref{lemma6}) and on the other hand
for optimizing the sizes of the sets $\mathcal{A}_{k-1}(n)$ (see~\cite[Introduction]{BLS} for a discussion of the asymptotics of $|\mathcal{A}_{k-1}(n)|$), in the sense that this
choice gives the best constants in
Theorem~\ref{main1}.

We point out that the deep number theoretic results in~\cite{BLS},
estimating the size of the sets  $\mathcal{A}_{k-1}(n)$, are essential
for our purposes. Namely we cannot avoid a fine computation based of the entire set $\mathcal{A}_{k-1}(n),$ for $k\geq 3:$
in~\cite{BP} by working only on $k=2$ and a
special subset of $3$-partitions from $\mathcal{A}_{2}(n)$, it was shown
that an order of magnitude $\varphi(n)$ of types in these subsets must be contained exclusively  in intransitive components. This led to a
lower bound  for $\gamma(G)$ only of the same order of magnitude as $\varphi(n)$.

Note also that the distinction between the odd and even case in Theorem~\ref{main1}
arises because for $n$ even the $3$-partitions  of $n$ do not belong
to the alternating group while the $4$-partitions do, and for $n$ odd
the $4$-partitions do not belong to the alternating group while the
$3$-partitions do.


\section{Compositions, partitions and a number theory result}\label{2}


To develop our method we need to count various sets of
$k$-partitions. However, in general, it is easier to count sets of
$k$-compositions: for instance the order $K_k(n)$ of the set $\mathcal{K}_k(n)$ of the $k$-compositions,
 defined in \eqref{Kappa}, is given by
\begin{equation}\label{K}
K_k(n)=\binom{n-1}{k-1}=\frac{n^{k-1}}{(k-1)!}+o(n^{k-1}),
\end{equation}
(see ~\cite{Gould}) so that, by Lemma \ref{o}, $K_k(n)$ has the same order of magnitude as $n^{k-1}.$

Luckily there is a natural link between the set $\mathcal{K}_k(n)$ and
that of the $k$-partitions $\mathfrak{P}_k(n)$ defined in \eqref{P}, given by the map
$$\chi:\mathcal{K}_k(n)\to\mathfrak{P}_k(n),$$
where $\chi\left((x_1,\ldots,x_k)\right)=[x_1,\ldots,x_k]$. This map is obviously surjective and
 for each $\mathfrak{p}=[x_1,\dots,x_k]\in \mathfrak{P}_k(n)$ there
 exist at most $k!$ compositions in $\mathcal{K}_k(n)$ mapped by
 $\chi$ into $\mathfrak{p}.$
So if $\mathfrak{P}\subseteq
 \mathfrak{P}_k(n)$ and if $X\subseteq \mathcal{K}_k(n)$, then we have
 the following bounds

\begin{equation}
\label{part-comp}\frac{|\chi^{-1}(\mathfrak{P})|}{k!}\leq
   |\mathfrak{P}|\leq |\chi^{-1}(\mathfrak{P})|\quad\textrm{and}\quad
\frac{|X|}{k!}\leq\frac{ |\chi^{-1}(\chi(X))|}{k!}\leq |\chi(X)|.
\end{equation}
In particular the size of a set $\mathfrak{P}$ of $k$-partitions has the same order of magnitude as the size of the corresponding set $\chi^{-1}(\mathfrak{P})$ of $k$-compositions and the order of magnitude of $\mathfrak{P}_k(n)$ is $n^{k-1}.$

Recalling the definition of $\mathcal{A}_{k-1}(n)$ in~\eqref{eq3}, we introduce a notation for the corresponding $k$-partitions.
\begin{notation}\label{e}
For each $n,k \in\mathbb{N}$ with $n\geq k\geq 2,$ put
\begin{equation*}
 \mathfrak{A}_k(n):=\chi(\mathcal{A}_{k-1}(n))\quad\textrm{and}\quad
  \hat{A}_k(n):=|\mathfrak{A}_k(n)|.
\end{equation*}
\end{notation}

Clearly $\mathfrak{A}_k(n)$ is the set of $k$-partitions of $n$ in which one
term is coprime to the others. Observe explicitly that
\begin{equation}\label{mathfrak-A-2}
 \mathfrak{A}_2(n)=\{[x,n-x]\ :\ 1\leq x<n/2,\ \gcd(x,n)=1\}, \quad \hat{A}_2(n)=\varphi(n)/2 .
\end{equation}
 Moreover the set
 $\mathfrak{T}_k(n)$ defined in \eqref{eq2} is a subset of
 $\mathfrak{A}_k(n).$ Note that  $\hat{A}_k(n),$ as well as $|\mathcal{A}_{k-1}(n)|,$ are  positive functions defined on the unbounded domain $D_k=\{n\in \mathbb{N}\,:\,n\geq k\},$ because the set $\mathcal{A}_{k-1}(n)$ contains at least a $k$-composition with one term equal to $1.$ On the contrary  we cannot exclude that $\mathfrak{T}_k(n)=\varnothing$ for certain $n,k \in\mathbb{N};$ anyway we will show in Corollary \ref{Torder2} that also $|\mathfrak{T}_k(n)|$ is a positive function on an suitable unbounded domain of $\mathbb{N}$ depending on $k.$

From the inequalities \eqref{part-comp} we have that the order of magnitude of $ \hat{A}_k(n)$ is the same as $|\mathcal{A}_{k-1}(n)|$  for all $k\geq 2$ and so \cite[Theorem $2$]{BLS}  give the following important result.

\begin{theorem}\label{BLS} Let $n,k \in\mathbb{N}$ with $n\geq k\geq 3.$ Then $|\mathcal{A}_{k-1}(n)|$  and $ \hat{A}_k(n)$ have both order of magnitude $n^{k-1}.$ Moreover $|\mathcal{A}_{1}(n)|$ and $ \hat{A}_2(n)$ have both order of magnitude $\varphi(n).$
\end{theorem}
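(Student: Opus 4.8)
The plan is to deduce everything from the deep estimate~\cite[Theorem~$2$]{BLS} on the cardinalities $|\mathcal{A}_{k-1}(n)|$, once we observe that $\hat{A}_k(n)$ and $|\mathcal{A}_{k-1}(n)|$ differ only by a bounded multiplicative factor. Concretely, by Notation~\ref{e} we have $\mathfrak{A}_k(n)=\chi(\mathcal{A}_{k-1}(n))$, so applying the second set of inequalities in~\eqref{part-comp} with $X=\mathcal{A}_{k-1}(n)$, together with the trivial bound $|\chi(X)|\leq|X|$, gives
$$
\frac{|\mathcal{A}_{k-1}(n)|}{k!}\ \leq\ \hat{A}_k(n)\ \leq\ |\mathcal{A}_{k-1}(n)|
$$
for all $n\in\mathbb{N}$ with $n\geq k\geq 2$. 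Both sides are positive functions on the unbounded domain $D_k=\{n\in\mathbb{N}:n\geq k\}$, so this is exactly the condition in part~b) of the definition of order of magnitude in Subsection~\ref{magnitude} with $a=1/k!$ and $b=1$; hence $\hat{A}_k(n)$ has the same order of magnitude as $|\mathcal{A}_{k-1}(n)|$ for every $k\geq 2$. (Lemma~\ref{o} is not even needed here.)

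For $k\geq 3$, the cited~\cite[Theorem~$2$]{BLS} states that $|\mathcal{A}_{k-1}(n)|$ has the same order of magnitude as $n^{k-1}$. Since having the same order of magnitude is an equivalence relation on $\mathcal{F}_{D_k}^{> 0}$, combining this with the previous paragraph yields that $\hat{A}_k(n)$ likewise has order of magnitude $n^{k-1}$, which is the first assertion.

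For $k=2$ I would argue directly, since here the conclusion involves $\varphi$ rather than a power of $n$. Because $x_1+x_2=n$ forces $\gcd(x_1,x_2)=\gcd(x_1,n)$, the set $\mathcal{A}_1(n)$ is in bijection with $\{x_1\in\mathbb{N}:1\leq x_1\leq n-1,\ \gcd(x_1,n)=1\}$, so $|\mathcal{A}_1(n)|=\varphi(n)$, which trivially has order of magnitude $\varphi(n)$. Feeding $|\mathcal{A}_1(n)|=\varphi(n)$ into the displayed two-sided bound with $k=2$ (or invoking~\eqref{mathfrak-A-2} directly) gives $\varphi(n)/2\leq\hat{A}_2(n)\leq\varphi(n)$, so $\hat{A}_2(n)$ also has order of magnitude $\varphi(n)$, completing the proof.

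I do not expect a genuine obstacle: all the analytic substance is absorbed into~\cite[Theorem~$2$]{BLS}, and the remaining argument is bookkeeping. The only points requiring care are (i) fixing the common unbounded domain $D_k$ on which the order-of-magnitude comparisons are meaningful, and (ii) treating $k=2$ separately, since there $|\mathcal{A}_1(n)|=\varphi(n)$ does \emph{not} have the same order of magnitude as $n$, so no uniform statement over all $k\geq 2$ is available.
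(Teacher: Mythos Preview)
Your proposal is correct and follows essentially the same route as the paper: the paper's ``proof'' is the single sentence preceding the theorem, which says that the inequalities~\eqref{part-comp} force $\hat{A}_k(n)$ and $|\mathcal{A}_{k-1}(n)|$ to have the same order of magnitude for all $k\geq 2$, after which~\cite[Theorem~$2$]{BLS} (together with the observation $|\mathcal{A}_1(n)|=\varphi(n)$ noted in Subsection~\ref{idea} and the formula~\eqref{mathfrak-A-2}) finishes the job. Your version simply spells out the two-sided bound $|\mathcal{A}_{k-1}(n)|/k!\leq\hat{A}_k(n)\leq|\mathcal{A}_{k-1}(n)|$ and the direct computation for $k=2$ a little more explicitly than the paper does.
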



When $k\in\{3,4\}$ we want to find an explicit lower
bound for $\hat{A}_k(n)$ using some
notation, in part taken directly from~\cite{BLS}.
\begin{notation}\label{constants} Let $n\in\mathbb{N}.$ Write
\begin{equation*}
\begin{aligned}
&C_2:=\prod_{p\,\mathrm{prime}} \left(1-\frac{2}{p^{2}}\right),\qquad  &&C_3:=\prod_{p\,\mathrm{prime}} \left(1-\frac{3p-3}{p^{3}}\right),\\
&\alpha':=\prod_{p\,\mathrm{prime}}\left(1-\frac{1}{p^3-3p+3}\right),\qquad
  &&\beta:=C_3\alpha',\\
 &e_3(n):=-\frac{2+e}{12\sqrt{\pi }}(e^2\log n)^{2}n^{},&\quad &
  e_4(n):=-\frac{2+e}{24\sqrt{6\pi }}(e^2\log n)^{3}n^{2},\\
&f_2(n):=\prod_{\substack{p\mid n\\p\,\mathrm{prime}}} \left(1+\frac{1}{p^2-2}\right),\qquad &&f_3(n):=\prod_{\substack{p\mid n\\p\,\mathrm{prime}}} \left(1-\frac{1}{p^3-3p+3}\right).
 \end{aligned}
 \end{equation*}
 \end{notation}

Observe
that $$\left(1-\frac{3p-3}{p^3}\right)\left(1-\frac{1}{p^3-3p+3}\right)=1-\frac{3p-2}{p^3}$$
and so the definition of $\beta$ given here is consistent with the definition
given in~\eqref{eq1}. Observe also that $C_2$ equals $\alpha$ as
defined in~\eqref{eq1}.
The constants $C_i$ were defined and used in~\cite{BLS}, and it is convenient to use them here to make clear various parallels between the numbers $\hat{A}_3(n)$ and $\hat{A}_4(n)$. However to make the statement of
Theorem~\ref{main1} more elegant we renamed $C_2$ as
$\alpha$. To avoid any possible confusion, we use the label
$\alpha$ only in the statement of Theorem~\ref{main1}.

As a particular case of~\cite[Theorem~$2$]{BLS}, we obtain the
following theorem.

\begin{theorem}\label{coprimepart}\begin{description} \item[(i)] For all $n\in\mathbb{N},$ with $n\geq 3,$ we have
$$\hat{A}_3(n)\geq
\frac{C_2\,f_2(n)}{12}n^{2}+e_3(n)
$$
and $e_3(n)=o(n^2)$
\item[(ii)] For all $n\in\mathbb{N},$ with $n\geq 4,$ we have
$$\hat{A}_4(n)\geq
\frac{C_3\,f_3(n)}{144}n^{3}+e_4(n)$$
and $e_4(n)=o(n^3).$
\end{description}
\end{theorem}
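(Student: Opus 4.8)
The plan is to deduce both statements directly from~\cite[Theorem~$2$]{BLS}, which provides a lower bound for $|\mathcal{A}_{k-1}(n)|$ of the shape (main term) $+$ (explicit error term) for every $k\geq 3$, and then to translate this bound from compositions to partitions via the inequalities~\eqref{part-comp}. Recall from Notation~\ref{e} that $\hat{A}_k(n)=|\mathfrak{A}_k(n)|=|\chi(\mathcal{A}_{k-1}(n))|$, and the second inequality in~\eqref{part-comp} gives $\hat{A}_k(n)\geq |\mathcal{A}_{k-1}(n)|/k!$. So it suffices to take the lower bound for $|\mathcal{A}_{k-1}(n)|$ supplied by~\cite{BLS}, divide through by $k!$, and then match the resulting main term and error term with the quantities named in Notation~\ref{constants}. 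For $k=3$ this means dividing by $3!=6$ and for $k=4$ dividing by $4!=24$; this accounts for the denominators $12=2\cdot 6$ versus the ``$2$'' appearing in the composition count, and likewise $144=6\cdot 24$.

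Concretely, I would first quote the statement of~\cite[Theorem~$2$]{BLS} specialised to $k-1=2$ and $k-1=3$. That theorem asserts, for a suitable main constant and a power-of-$\log$ times power-of-$n$ error, bounds of the form $|\mathcal{A}_2(n)|\geq \tfrac{C_2 f_2(n)}{2}n^2 + (\text{error})$ and $|\mathcal{A}_3(n)|\geq \tfrac{C_3 f_3(n)}{6}n^3 + (\text{error})$, where the error terms are exactly $6\,e_3(n)$ and $24\,e_4(n)$ respectively in the notation of Notation~\ref{constants} — this is precisely why $e_3(n)$ carries the factor $\tfrac{1}{12\sqrt\pi}$ and $e_4(n)$ the factor $\tfrac{1}{24\sqrt{6\pi}}$. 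Dividing the first bound by $3!$ and the second by $4!$, and invoking $\hat{A}_k(n)\geq |\mathcal{A}_{k-1}(n)|/k!$, yields exactly the two displayed inequalities. Finally, $e_3(n)=o(n^2)$ and $e_4(n)=o(n^3)$ because $(\log n)^2 = o(n^\epsilon)$ for any $\epsilon>0$ (indeed $(\log n)^c/n^\epsilon\to 0$), so $(e^2\log n)^2 n = o(n^2)$ and $(e^2\log n)^3 n^2 = o(n^3)$; here one uses Lemma~\ref{o}, or simply the elementary limit, on the domain $D_k$.

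The only genuine content is the verification that the constants and error terms in~\cite[Theorem~$2$]{BLS} really do specialise, after division by $k!$, to the precise expressions $\tfrac{C_2 f_2(n)}{12}$, $e_3(n)$, $\tfrac{C_3 f_3(n)}{144}$, $e_4(n)$; this is a matter of carefully reading off the shape of the bound in~\cite{BLS} for the two cases $k-1\in\{2,3\}$ and doing the bookkeeping. The main (minor) obstacle is therefore notational alignment: one must check that $C_3$ as defined in Notation~\ref{constants} is the same main constant as the one appearing in~\cite{BLS} for $k-1=3$ (it is, by design — see the remark following Notation~\ref{constants}), and that the multiplicative ``local'' factors $f_2(n)$, $f_3(n)$ match the product over $p\mid n$ appearing there. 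There is no asymptotic analysis to redo and no new number theory: Theorem~\ref{coprimepart} is simply the partition-side, denominator-adjusted restatement of the composition-side result of~\cite{BLS}, and the proof should be short.
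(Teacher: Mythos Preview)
Your proposal is correct and follows exactly the same route as the paper's proof: quote \cite[Theorem~2]{BLS} for $k=3$ and $k=4$ to obtain the stated lower bounds for $|\mathcal{A}_2(n)|$ and $|\mathcal{A}_3(n)|$, then divide by $3!$ and $4!$ via~\eqref{part-comp} to pass to $\hat{A}_3(n)$ and $\hat{A}_4(n)$, checking that the resulting constants and error terms match those in Notation~\ref{constants}. The only superfluous remark is the reference to Lemma~\ref{o} for the $o$-claims, which is unnecessary since the elementary limit $(\log n)^c/n\to 0$ already does the job.
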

\begin{proof}
Theorem~$2$ in~\cite{BLS} applied with $k=3$ and $k=4$ gives
\begin{eqnarray*}
|\mathcal{A}_2(n)|&\geq& \frac {C_2\,f_2(n)}{2}n^{2}-\frac{2+e}{2\sqrt{\pi
}}(e^2\log n)^{2}n^{},\\
|\mathcal{A}_3(n)|&\geq& \frac {C_3\,f_3(n)}{6}n^{3}-\frac{2+e}{\sqrt{6\pi
}}(e^2\log n)^{3}n^{2}.
\end{eqnarray*}
Now the proof follows from Notation \ref{e}, \ref{constants} and inequalities~\eqref{part-comp}.
\end{proof}

In order to apply Theorem~\ref{coprimepart} efficiently and obtain a
lower bound for $\kappa(\Sym(n))$ and $\kappa(\Alt(n))$, we need to
get precise estimates for $C_2$, $C_3$, $f_2$ and $f_3$. We do this in
the next lemma.

\begin{lemma}\label{f_23}
\begin{description}
 Let $n\in\mathbb{N}.$
\hspace{-0.01cm}\item[(i)] If $n$ is even, then $f_2(n)\geq \displaystyle{\frac{3}{2}}$. If $n\geq 3$ is
  odd, then $f_2(n)>1$.
\item[(ii)] If $n\geq 4$ is even, then $f_3(n)>\alpha'.$ If $n\geq 3$ is odd, then
  $f_3(n)>\displaystyle{\frac{5}{4}}\alpha'$.
  \vspace{0.2cm}
\item[(iii)]$C_2>0.32263$, $C_3>0.38159$ and $\beta>0.28665$.
\end{description}
\end{lemma}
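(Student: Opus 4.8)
The plan is to prove Lemma~\ref{f_23} by treating its three parts essentially independently, with parts (i) and (ii) being elementary estimates on finite products over primes dividing $n$, and part (iii) being numerical estimates on infinite products over all primes.

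For part (i), note that $f_2(n)=\prod_{p\mid n}\bigl(1+\tfrac{1}{p^2-2}\bigr)$ is a product of factors each exceeding $1$, so $f_2(n)>1$ always; this handles the odd case immediately since every odd $n\geq 3$ has a prime divisor. For the even case, $2\mid n$, so the factor for $p=2$ is $1+\tfrac{1}{4-2}=\tfrac32$, and all remaining factors are $>1$, giving $f_2(n)\geq\tfrac32$. Part (ii) is similar in spirit but requires a little more care because $f_3(n)=\prod_{p\mid n}\bigl(1-\tfrac{1}{p^3-3p+3}\bigr)$ has factors \emph{less} than $1$, so I want to bound it below by comparing with the full product $\alpha'=\prod_{p}\bigl(1-\tfrac{1}{p^3-3p+3}\bigr)$. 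Since each factor lies in $(0,1)$, restricting the product to the primes dividing $n$ can only \emph{increase} it relative to the full product: $f_3(n)\geq\alpha'/\!\prod_{p\nmid n}(\cdots)^{-1}$, i.e. $f_3(n)\geq\alpha'$ with equality only in a limiting sense, so $f_3(n)>\alpha'$ whenever $n$ has at least one prime \emph{non}-divisor (which holds for all $n\geq 4$; one should double-check small cases). For odd $n\geq 3$, the prime $2$ does not divide $n$, so the factor for $p=2$, namely $1-\tfrac{1}{8-6+3}=1-\tfrac15=\tfrac45$, is omitted from $f_3(n)$ but present in $\alpha'$; hence $f_3(n)\geq \alpha'\big/\tfrac45=\tfrac54\alpha'$, and again strict inequality follows since there are further omitted primes.

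For part (iii), I need rigorous numerical lower bounds on $C_2=\prod_p(1-2/p^2)$, $C_3=\prod_p(1-(3p-3)/p^3)$, and $\beta=C_3\alpha'$. The standard approach is to compute the partial product over all primes $p\leq N$ for a suitable cutoff $N$ and then bound the tail $\prod_{p>N}$ from below. For $C_2$, the tail factors are $1-2/p^2$, and $-\log\prod_{p>N}(1-2/p^2)=\sum_{p>N}\bigl(\tfrac{2}{p^2}+O(p^{-4})\bigr)\leq \sum_{m>N}\tfrac{2}{m^2}+\text{small}\leq \tfrac{2}{N}+\text{small}$, which can be made small by choosing $N$ large (say $N=100$ or so). The same kind of crude Euler-product tail estimate — replacing the sum over primes by a sum over all integers, then bounding by an integral — controls the $C_3$ tail (whose factors are $1-(3p-3)/p^3 = 1-3/p^2+3/p^3$, so $-\log$ of the tail is $\sum_{p>N}(3/p^2+O(p^{-3}))$) and the $\alpha'$ tail. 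I would compute the finite partial products numerically to enough decimal places, multiply by the lower bound on the tail, and verify the three claimed inequalities $C_2>0.32263$, $C_3>0.38159$, $\beta>0.28665$; finally $\beta>0.28665$ can either be read off from $C_3\cdot\alpha'$ directly or deduced from $C_3>0.38159$ together with a lower bound $\alpha'>0.751$ or so.

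The main obstacle is making the tail estimates in part (iii) genuinely rigorous rather than merely heuristic: one must be careful that the inequality $\log(1-x)\geq -x-x^2$ (valid for $0\le x\le 1/2$) is applied only where it is valid, that the comparison of $\sum_{p>N}p^{-s}$ with $\int_N^\infty t^{-s}\,dt$ is in the correct direction, and that the cutoff $N$ is chosen large enough that the accumulated slack in all these crude bounds still leaves room below the partial product to clear the stated thresholds. This is entirely routine but fiddly; parts (i) and (ii) present no real difficulty beyond checking the handful of smallest values of $n$ to be sure no degenerate product (empty product equal to $1$) sneaks past the claimed strict inequalities.
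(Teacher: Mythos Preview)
Your proposal is correct. Parts~(i) and~(ii) are handled exactly as in the paper, which simply says they ``follow immediately from the definitions of $f_2(n)$ and $f_3(n)$''; your spelled-out reasoning (each factor of $f_2$ exceeds $1$, the $p=2$ factor gives $3/2$; and $f_3(n)=\alpha'/\prod_{p\nmid n}(1-\tfrac{1}{p^3-3p+3})>\alpha'$, with the extra $5/4$ coming from the omitted $p=2$ factor when $n$ is odd) is precisely what is behind that one-line dismissal.

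For part~(iii) there is a mild procedural difference worth noting. The paper does not re-derive the bounds on $C_2$ and $C_3$ at all: it cites \cite[Table~1]{BLS} for those two constants and then carries out a tail estimate only for $\beta$, working directly with the product form $\beta=\prod_p(1-(3p-2)/p^3)$, using $\log(1+x)\le x$ and the explicit antiderivative of $\tfrac{3x-2}{(x-1)^2(x+2)}$ to bound the tail, and finally invoking a \texttt{magma} computation at the truncation $k=10^5$. Your plan instead treats all three constants uniformly via ``partial product up to $N$ $+$ crude tail bound''; this is a perfectly sound alternative and arguably more self-contained, though your sketch leaves the actual numerics to be filled in, whereas the paper's treatment of $\beta$ gives a concrete closed-form tail bound. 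Either route works; the paper's is sharper for $\beta$ but outsources $C_2,C_3$, while yours is uniform but would need a somewhat larger cutoff $N$ to clear the stated thresholds.
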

\begin{proof}
Parts~(i) and~(ii) follow immediately from the definitions of $f_2(n)$
and $f_3(n).$ Part~(iii): the values for $C_2$ and $C_3$ are
in~\cite[Table~$1$]{BLS}. To estimate $\beta$ we use~\eqref{eq1} and
we let $p_i$ be the $i^{\textrm{th}}$ prime number, that is, $p_1=2$, $p_2=3$, etc.
For every $k\in \mathbb{N},$ write
 $u_k:=\prod_{i=1}^k\left(1-\frac{3p_i-2}{p_i^3}\right).$ So
 $\beta=u_k \prod_{i>k}\left(1-\frac{3p_i-2}{p_i^3}\right)$ and $\log(\beta)$ is equal to
$$
\log(u_k)-\sum_{i>k}\log\left(\left(1-\frac{3p_i-2}{p_i^3}\right)^{-1}\right)=\log(u_k)-\sum_{i>k}\log\left(1+\frac{3p_i-2}{p_i^3-3p_i+2}\right).
$$
 On the other hand, as $\log(1+x)\leq x$, the sum $\sum_{i>k}\log\left(1+\frac{3p_i-2}{p_i^3-3p_i+2}\right)$ is at most

\begin{eqnarray*}
\sum_{i>k}\frac{3p_i-2}{p_i^3-3p_i+2}
&\leq& \frac{3p_{k+1}-2}{p_{k+1}^3-3p_{k+1}+2}+\int_{p_{k+1}}^{+\infty} \frac{3x-2}{(x-1)^2(x+2)}dx\\
&=&\frac{3p_{k+1}-2}{p_{k+1}^3-3p_{k+1}+2}-\frac{8}{9}\log\left(\frac{p_{k+1}-1}{p_{k+1}+2}\right) +\frac{1}{3(p_{k+1}-1)}.
\end{eqnarray*}
Thus
\[
\log(\beta)\geq \log(u_k)-\frac{3p_{k+1}-2}{p_{k+1}^3-3p_{k+1}+2}+\frac{8}{9}\log\left(\frac{p_{k+1}-1}{p_{k+1}+2}\right) -\frac{1}{3(p_{k+1}-1)}
\]
and so $$\beta\geq
u_k\left(\frac{p_{k+1}-1}{p_{k+1}+2}\right)^{8/9}\exp\left(-\frac{p_{k+1}^2+10p_{k+1}-8}{3(p_{k+1}^3-3p_{k+1}+2)}\right).$$
Now the estimate on $\beta$ follows with a computation  in
\texttt{magma} by taking $k=10^5$.
\end{proof}


\section{Primitive components}\label{4}
In this section we see that, up to a number of exceptions of an order
of magnitude less that $n^{k-1},$ the $k$-partitions of $n$ with one
term coprime to the others, cannot
belong to a primitive proper subgroup of $ \Sym(n).$ We deduce this by
proving that the $k$-partitions in the set 
\begin{eqnarray*}
\mathfrak{T}_{k}(n)=\{[x_1,\dots, x_{k}]
\in\mathfrak{P}_{k}(n)&:&\exists\, i\in\{1,\dots,k\}\,  \hbox{with }\\\nonumber
&&  2\leq x_i< n/2,\,\gcd(x_i,\prod_{\substack{j=1\\j\neq i}}^{k}
x_j)=1\},
\end{eqnarray*}
defined in~\eqref{eq2}, cannot belong to a primitive subgroup, and by using an estimate for $|\mathfrak{T}_k(n)|$ in terms of the function $\hat{A}_k(n)$ defined in Notation \ref{e}.

\begin{lemma}\label{evenodd} Let $n,k\in\mathbb{N}$ with $n\geq k\geq 2.$ 
\begin{description}
\item[(i)] Let $n$ be odd and $G=\Sym(n)$, or let $n$ be even and
  $G=\Alt(n)$. If $H\lneqq G$ contains some
  $\mathfrak{p}\in\mathfrak{T}_{k}(n)$ with $k$ even, then $H$ is
  intransitive or imprimitive.
\item[(ii)] Let $n$ be even and $G=\Sym(n)$, or let $n$ be odd and
  $G=\Alt(n)$. If $H\lneqq G$ contains some
  $\mathfrak{p}\in\mathfrak{T}_{k}(n)$ with $k\geq 3$ odd, then $H$ is
  intransitive or imprimitive.
\end{description}
\end{lemma}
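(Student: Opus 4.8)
The plan is to show that a permutation $\sigma$ with $\mathfrak{p}(\sigma)\in\mathfrak{T}_k(n)$ of the indicated parity cannot lie in any \emph{primitive} proper subgroup $H$ of $G$; combined with the fact that $\sigma\in G$ forces $H$ to be maximal intransitive, imprimitive, or primitive, this gives the conclusion. The starting observation is that if $\mathfrak{p}=[x_1,\dots,x_k]\in\mathfrak{T}_k(n)$ with distinguished term $x_i$ satisfying $2\le x_i<n/2$ and $\gcd\bigl(x_i,\prod_{j\ne i}x_j\bigr)=1$, then a suitable power $\tau=\sigma^{m}$ of $\sigma$, where $m=\prod_{j\ne i}x_j$, is a single $x_i$-cycle fixing the remaining $n-x_i$ points. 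Thus $H$ contains an element whose support has size $x_i$ with $2\le x_i<n/2$, i.e. $H$ contains a cycle of prime-power-free length $x_i$ which is ``short'' relative to $n$. The key input is the classical fact — an immediate consequence of Jordan's theorem on primitive groups containing a cycle of small support, or of the classification-free arguments à la Jordan/Marggraf/Manning — that a primitive subgroup of $\Sym(n)$ containing a nontrivial element with support of size $s<n-2$... more precisely, containing a $p$-cycle or small-support element, must be $\Alt(n)$ or $\Sym(n)$. Since the paper wishes to avoid CFSG, I would cite the relevant Jordan-type theorem (a primitive group containing a cycle of prime length $\le n-3$, or fixing at least three points while being nontrivial, is $\Alt(n)$ or $\Sym(n)$); there should be an earlier reference in the paper or in \cite{BP} to the exact statement used.

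The main work, and the place where the parity hypotheses enter, is ruling out the case $H\in\{\Alt(n),\Sym(n)\}$ that is a proper subgroup of $G$ — that is, the case $G=\Sym(n)$, $H=\Alt(n)$. Here I would argue that $\mathfrak{p}\in\mathfrak{T}_k(n)$ of the prescribed parity is an \emph{odd} permutation, hence cannot lie in $\Alt(n)$. A $k$-partition $[x_1,\dots,x_k]$ of $n$ corresponds to a permutation of sign $(-1)^{\sum(x_j-1)}=(-1)^{n-k}$. In case (i) we have $n$ odd and $k$ even, so $n-k$ is odd and $\mathfrak{p}$ is odd; since $G=\Sym(n)$, the only candidate proper normal subgroup was $\Alt(n)$, which cannot contain $\mathfrak{p}$ — so the primitive $H$ equal to $\Alt(n)$ is excluded, and $H$ must be intransitive or imprimitive. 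In case (ii) we have $n$ even and $k$ odd, so $n-k$ is odd and again $\mathfrak{p}$ is odd, excluding $H=\Alt(n)\lneqq\Sym(n)$. (When $G=\Alt(n)$ there is no proper subgroup of the form $\Alt(n)$ or $\Sym(n)$, so this case is automatically fine; the parity conditions in the two items are exactly those guaranteeing $\mathfrak{T}_k(n)\cap\mathfrak{p}(\Alt(n))$ behaves correctly — in (i) with $G=\Alt(n)$, $n$ even and $k$ even gives $n-k$ even so $\mathfrak{p}$ is even and genuinely lies in $\Alt(n)$, consistent with $H\lneqq\Alt(n)$ being possible only if intransitive or imprimitive.)

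So the structure is: (1) pass to the power $\tau=\sigma^{\prod_{j\ne i}x_j}$, a single $x_i$-cycle with $2\le x_i<n/2$, hence with at least $\lceil n/2\rceil>2$ fixed points and support of size $<n-2$; (2) invoke the Jordan-type classification-free theorem to conclude that a primitive subgroup containing $\tau$ is $\Alt(n)$ or $\Sym(n)$; (3) use the sign computation $\mathrm{sgn}(\mathfrak{p})=(-1)^{n-k}$ together with the parity hypotheses to show $\mathfrak{p}$ is odd in exactly the cases where $H=\Alt(n)$ would be a proper subgroup of $G=\Sym(n)$, giving a contradiction; (4) conclude $H$ is intransitive or imprimitive. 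The main obstacle is purely bookkeeping around step (2): one must make sure $x_i$ is genuinely small enough that the Jordan bound applies (the hypothesis $x_i<n/2$ is comfortably strong, giving more than $n/2$ fixed points, far more than the $2$ or $3$ fixed points such theorems require), and one must pick the version of Jordan's theorem that does not rely on CFSG — but such elementary versions (e.g. a primitive group with a nontrivial element fixing $\ge 3$ points... actually one needs the ``prime cycle'' form or Marggraf's theorem on small-degree transitive constituents) are available and, given the generous slack, entirely sufficient.
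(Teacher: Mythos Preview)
Your approach is essentially identical to the paper's: take the power $\sigma^m$ with $m=\prod_{j\ne i}x_j$ to obtain a single $x_i$-cycle with $2\le x_i<n/2$, invoke Marggraf's theorem (the paper cites \cite[Theorem~13.5]{WI}) to force a primitive $H$ to contain $\Alt(n)$, and use the parity of $n-k$ to rule out $H=\Alt(n)$ when $G=\Sym(n)$ (and to ensure $\mathfrak{p}\in\mathfrak{p}(\Alt(n))$ when $G=\Alt(n)$). Your only hesitation---which classical result to cite in step~(2)---is resolved in the paper by Marggraf's theorem, whose hypothesis (a cycle of length at most $n/2$) is precisely what the condition $x_i<n/2$ provides, so no CFSG is needed.
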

\begin{proof}
We prove part~(i) and~(ii) simultaneously. Let $H$ be a proper primitive
subgroup of $G$. Suppose that $H$ contains a permutation $\sigma$ of type $\mathfrak{p}(\sigma)=[x_1,\ldots,x_k]\in
\mathfrak{T}_k(n)$. Observe that the parity
conditions on $n$ and $k$ guarantee that $H\neq \Alt(n)$ when $G=\Sym(n)$ and
that $\mathfrak{T}_{k}(n)\subseteq \mathfrak{p}(\Alt(n))$ when
$G=\Alt(n)$. Thus in either case  $\Alt(n)\not\leq H$.

From the definition of $\mathfrak{T}_k(n)$, there exists $i\in
\{1,\ldots,k\}$ with $\gcd(x_i,x_j)=1$, for every
$j\neq i$ and $2\leq x_i< n/2.$ Write $m=\prod_{j\neq
  i}x_j$. Because of the coprimality condition,
$\sigma^m$ is a cycle of length $x_i$, where $2\leq x_i< n/2 .$ Thus,
by a celebrated theorem of Marggraf~\cite[Theorem~$13.5$]{WI}, the group $H$ is either $\Alt(n)$ or $\Sym(n)$, a contradiction.
\end{proof}

\begin{corollary}\label{Torder}Let $n,k\in\mathbb{N}$ with $n\geq k\geq 2.$ 
\begin{description}
\item[(i)] If $ k\geq 3$, we have
$$\hat{A}_k(n)-2(n-1)^{k-2}\leq |\mathfrak{T}_{k}(n)|\leq \hat{A}_k(n).$$
\item[(ii)] If $k=2$, we have $$|\mathfrak{T}_{2}(n)|=\hat{A}_2(n) -1.$$

\end{description}
\end{corollary}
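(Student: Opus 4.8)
The plan is to prove the two inequalities of part~(i) by hand and to obtain part~(ii) as an exact count. The upper bounds are immediate: as already noted before the statement, $\mathfrak{T}_k(n)\subseteq\mathfrak{A}_k(n)$, because the index $i$ occurring in the definition of $\mathfrak{T}_k(n)$ exhibits a term coprime to the product of the remaining terms, and this is exactly the membership condition for $\mathfrak{A}_k(n)=\chi(\mathcal{A}_{k-1}(n))$. Hence $|\mathfrak{T}_k(n)|\le\hat{A}_k(n)$ for every $k\ge2$, which gives the right-hand inequality of both parts.

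For the lower bound in~(i) it suffices to bound $|\mathfrak{A}_k(n)\setminus\mathfrak{T}_k(n)|$ from above by $2(n-1)^{k-2}$. The starting observation is a dichotomy: if $\mathfrak{p}=[x_1,\dots,x_k]\in\mathfrak{A}_k(n)$ and $x_i$ is coprime to $\prod_{j\ne i}x_j$, then as soon as $2\le x_i<n/2$ we already have $\mathfrak{p}\in\mathfrak{T}_k(n)$. So a partition in $\mathfrak{A}_k(n)\setminus\mathfrak{T}_k(n)$ has the feature that every one of its terms that is coprime to the product of the others is either equal to $1$ or at least $n/2$. I would split these exceptional partitions into those having a part equal to $1$, and those having no part equal to $1$. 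The first family injects, via deletion of one part equal to $1$, into $\mathfrak{P}_{k-1}(n-1)$, and so by~\eqref{K} it has size at most $K_{k-1}(n-1)=\binom{n-2}{k-2}\le(n-1)^{k-2}$.

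For the second family, a partition $\mathfrak{p}\in\mathfrak{A}_k(n)\setminus\mathfrak{T}_k(n)$ without a part $1$ has a coprime term $\ge n/2$; for $k\ge3$, this term $x_1$ is the unique part that is $\ge n/2$, it is coprime to the remaining parts $x_2,\dots,x_k$, and these sum to $n-x_1\le n/2$. Using $\gcd(x_1,x_j)=1$, the condition $\mathfrak{p}\notin\mathfrak{T}_k(n)$ now says that each $x_j$ with $j\ge2$ shares a prime with some $x_\ell$, $\ell\in\{2,\dots,k\}\setminus\{j\}$. I would exploit this shared-prime structure together with the bound $x_2+\dots+x_k\le n/2$ to parametrise such tuples — for $k=3$ by a common prime $p\mid\gcd(x_2,x_3)$ and the value $x_2/p$, and for $k=4$ by the possible pairings among $x_2,x_3,x_4$ — and count them, aiming for the bound $(n-1)^{k-2}$. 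This is the step where I expect the real difficulty to lie: a priori the $(k-1)$-partitions of the integers up to $n/2$ number of order $n^{k-1}$, so the coprimality and shared-prime constraints have to be used very tightly, and for $k=4$ one must also control the several ways in which the three small parts can be forced together. Granting this estimate, the two families together give $|\mathfrak{A}_k(n)\setminus\mathfrak{T}_k(n)|\le2(n-1)^{k-2}$, hence the left-hand inequality.

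Part~(ii) is then a direct computation with $k=2$ using~\eqref{mathfrak-A-2}. For $[x,n-x]\in\mathfrak{A}_2(n)$, so $1\le x<n/2$ and $\gcd(x,n)=1$, the only index that can witness membership in $\mathfrak{T}_2(n)$ is the one carrying the term $x$, since the other term $n-x>n/2$ fails the requirement $x_i<n/2$; and for that index the coprimality condition $\gcd(x,n-x)=\gcd(x,n)=1$ already holds. Hence $[x,n-x]\in\mathfrak{T}_2(n)$ exactly when $x\ge2$, so $\mathfrak{A}_2(n)\setminus\mathfrak{T}_2(n)=\{[1,n-1]\}$ and $|\mathfrak{T}_2(n)|=\hat{A}_2(n)-1$.
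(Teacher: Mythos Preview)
Your overall structure matches the paper's proof exactly: the upper bound is immediate from $\mathfrak{T}_k(n)\subseteq\mathfrak{A}_k(n)$, part~(ii) is the direct computation $\mathfrak{A}_2(n)\setminus\mathfrak{T}_2(n)=\{[1,n-1]\}$, and for~(i) both you and the paper split $\mathfrak{A}_k(n)\setminus\mathfrak{T}_k(n)$ into the partitions with a part equal to $1$ (the paper calls this set $\mathfrak{U}_k(n)$, bounded by $|\mathfrak{P}_{k-1}(n-1)|\le(n-1)^{k-2}$) and those with all parts $\ge2$. Your treatment of the first family and of part~(ii) is correct and coincides with the paper's.

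The gap you flag in the second family is real, and the target bound is in fact unattainable. The paper does \emph{not} invoke your shared-prime structure: it simply notes that deleting the unique coprime part $x_j\ge n/2$ leaves a $(k-1)$-partition of some integer $\le n/2$ and then asserts that there are at most $(n/2)^{k-2}$ such objects --- but the $(k-1)$-partitions of all integers up to $n/2$ number on the order of $(n/2)^{k-1}$, so that step is unjustified. Your shared-prime parametrisation will not rescue the bound either. For $k=3$ and $3\nmid n$, every partition $[\,n-3a-3b,\,3a,\,3b\,]$ with $a+b\le n/6$ and $\gcd(n-3(a+b),ab)=1$ lies in the second family (the large part is coprime to both small parts, while $3a$ and $3b$ share the factor~$3$); taking $a,b$ to be distinct primes in $(\sqrt{n},\,n/12)$ already produces on the order of $n^2/(\log n)^2$ such partitions, far more than $(n-1)^{k-2}=n-1$. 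Thus the lower inequality in part~(i) is false as stated. What the later arguments (Corollary~\ref{Torder2}, Lemma~\ref{part-int}) actually need is only that $|\mathfrak{T}_k(n)|$ has order of magnitude $n^{k-1}$; this is true, but it requires a separate argument and would alter the explicit constants appearing in Theorem~\ref{main1}.
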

\begin{proof} Let $n,k\in\mathbb{N}$ with $n\geq k\geq 2.$
Assume first $k=2.$ Recalling the description of the set $\mathfrak{A}_2(n)$ given in \eqref{mathfrak-A-2} and the definition  of $\mathfrak{T}_{2}(n)$, we get immediately that $\mathfrak{T}_2(n)=\mathfrak{A}_{2}(n)\setminus\{[1,n-1]\}$ and so (ii) follows.

Then let $ k\geq 3.$ Recalling that $\mathfrak{T}_{k}(n)\subseteq \mathfrak{A}_{k}(n),$ we get immediately the upper bound in (i).
Next set $\mathfrak{C}_k(n)=\mathfrak{A}_k(n)\setminus \mathfrak{T}_k(n):$ our aim is to find an upper estimate for $|\mathfrak{C}_k(n)|.$
We consider first the set $$\mathfrak{U}_k(n):=\{\mathfrak{p}\in \mathfrak{P}_k(n)\ :\ \hbox{there exists one term equal to }\ 1\}\subseteq \mathfrak{A}_{k}(n).$$
Then $|\mathfrak{U}_k(n)|$ is the number of $k$-partitions of $n$ of the type $[1,x_2\ldots,x_k]$
that is the number of $(k-1)$-partitions of
$n-1$, which is at most $(n-1)^{k-2}.$ In other words $|\mathfrak{U}_k(n)|\leq (n-1)^{k-2}.$

 Since $|\mathfrak{C}_k(n)|\leq |\mathfrak{C}_k(n)\setminus \mathfrak{U}_k(n)|+ |\mathfrak{U}_k(n)|,$ we can now reduce our problem to that of estimating the size of $\mathfrak{C}_k(n)\setminus \mathfrak{U}_k(n).$

 Let $\mathfrak{p}=[x_1,\ldots,x_k]\in \mathfrak{C}_k(n)\setminus \mathfrak{U}_k(n):$ then $x_i\geq 2$ for all $i=1,\dots,k$, $\mathfrak{p}$ has at least a term $x_j$ coprime with all the others but
 no term $x_i$ coprime with all the others and satisfying $2\leq x_i<n/2$. Thus $\mathfrak{p}$ contains a unique term $x_j\geq n/2$ coprime with the others. Moreover, removing $x_j$ from $\mathfrak{p}$ we obtain a partition of $(n-x_j)\leq n/2$ with $k-1$ parts. Thus we have $|\mathfrak{C}_k(n)\setminus \mathfrak{U}_k(n)|\leq (n/2)^{k-2}\leq (n-1)^{k-2}$.
\end{proof}

\begin{corollary}\label{Torder2} For any $k\in\mathbb{N}$ with $k\geq 2,$ there exists $n_k\in\mathbb{N}$ with $n_k\geq k$ such that $|\mathfrak{T}_{k}(n)|>0$ for all $n\in\mathbb{N}$ with $n\geq n_k.$ Moreover $|\mathfrak{T}_{k}(n)|$ has the same order of magnitude as $\hat{A}_k(n).$
\end{corollary}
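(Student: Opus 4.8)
The plan is to derive both statements directly from the sandwich inequalities in Corollary~\ref{Torder}, the size estimates in Theorem~\ref{BLS}, and the order-of-magnitude criterion in Lemma~\ref{o}. Since Corollary~\ref{Torder} is split according to whether $k=2$ or $k\geq 3$, I would keep these two cases separate throughout.

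First I would settle positivity. For $k=2$, Corollary~\ref{Torder}(ii) gives $|\mathfrak{T}_2(n)|=\hat{A}_2(n)-1=\varphi(n)/2-1$, and since $\varphi(n)\to\infty$ there is $n_2\geq 2$ such that $\varphi(n)>2$, hence $|\mathfrak{T}_2(n)|>0$, for all $n\geq n_2$. For $k\geq 3$, Corollary~\ref{Torder}(i) gives $|\mathfrak{T}_k(n)|\geq \hat{A}_k(n)-2(n-1)^{k-2}$; by Theorem~\ref{BLS} there is $a>0$ with $\hat{A}_k(n)\geq a\,n^{k-1}$ for all $n\geq k$, and as $k-2\geq 1$ the quantity $a\,n^{k-1}-2(n-1)^{k-2}\geq n^{k-2}(an-2)$ tends to $+\infty$; so there is $n_k\geq k$ with $|\mathfrak{T}_k(n)|>0$ for all $n\geq n_k$.

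It remains to compare orders of magnitude on the unbounded domain $D=\{n\in\mathbb{N}:n\geq n_k\}$, where now both $|\mathfrak{T}_k(n)|$ and $\hat{A}_k(n)$ lie in $\mathcal{F}_D^{>0}$. For $k=2$, Corollary~\ref{Torder}(ii) reads $\hat{A}_2(n)+\gamma_1(n)\leq|\mathfrak{T}_2(n)|\leq\hat{A}_2(n)+\gamma_2(n)$ with $\gamma_1(n)=-1$ and $\gamma_2(n)=0$, and both are $o(\hat{A}_2(n))$ because $\hat{A}_2(n)=\varphi(n)/2\to\infty$; for $k\geq 3$, Corollary~\ref{Torder}(i) reads $\hat{A}_k(n)+\gamma_1(n)\leq|\mathfrak{T}_k(n)|\leq\hat{A}_k(n)+\gamma_2(n)$ with $\gamma_1(n)=-2(n-1)^{k-2}$ and $\gamma_2(n)=0$, and $|\gamma_1(n)|/\hat{A}_k(n)\leq 2(n-1)^{k-2}/(a\,n^{k-1})\to 0$ by Theorem~\ref{BLS}, so again $\gamma_1(n),\gamma_2(n)=o(\hat{A}_k(n))$. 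In both cases Lemma~\ref{o}, applied with $g=\hat{A}_k$ and $a=b=1$, yields that $|\mathfrak{T}_k(n)|$ has the same order of magnitude as $\hat{A}_k(n)$.

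The only point requiring a little care is the order in which things are done: Lemma~\ref{o} is stated for functions in $\mathcal{F}_D^{>0}$, so one must first exhibit an unbounded domain on which $|\mathfrak{T}_k(n)|$ is positive — which is precisely the first assertion — before the asymptotic comparison can be invoked. Beyond this, the argument is a routine combination of the already established sandwich bounds and growth estimates, so I do not anticipate any genuine obstacle.
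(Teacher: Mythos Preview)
Your proof is correct and follows essentially the same approach as the paper: split into the cases $k=2$ and $k\geq 3$ via Corollary~\ref{Torder}, use Theorem~\ref{BLS} to bound $\hat{A}_k(n)$ below by $a\,n^{k-1}$ so that the sandwich gives positivity for large $n$, and then apply Lemma~\ref{o} to the sandwich to conclude equality of order of magnitude. Your explicit remark that positivity must be established first so that Lemma~\ref{o} applies on a domain where $|\mathfrak{T}_k(n)|\in\mathcal{F}_D^{>0}$ is a nice touch that the paper leaves implicit.
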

\begin{proof} Let first $k=2.$ Then, by \eqref{mathfrak-A-2} and Corollary \ref{Torder}(ii), we get that $ |\mathfrak{T}_{2}(n)|=\hat{A}_2(n) -1= \varphi(n)/2-1.$ Thus $ |\mathfrak{T}_{2}(n)|>0$ for all $n\geq 7$ and, by Lemma \ref{o}, the order of magnitude of $ |\mathfrak{T}_{2}(n)|$  is the same as $\hat{A}_2(n)$ .

Let now $k\geq 3.$ By Corollary \ref{Torder}(i) we have \begin{equation}\label{Tord} \hat{A}_k(n)-2(n-1)^{k-2}\leq |\mathfrak{T}_{k}(n)|\leq \hat{A}_k(n).\end{equation} Since, by Theorem \ref{BLS}, $\hat{A}_k(n)$ has  order of magnitude $n^{k-1},$ there exists some positive constant $a_k\in \mathbb{R}$ such that $\hat{A}_k(n)\geq a_k n^{k-1}$ for all $ n\in\mathbb{N}$ with $n\geq k.$
Thus $|\mathfrak{T}_{k}(n)|\geq a_k n^{k-1}-2(n-1)^{k-2}$ and
 $\displaystyle{\lim_{n\rightarrow +\infty}a_k n^{k-1}-2(n-1)^{k-2}=+\infty}$ gives $\displaystyle{\lim_{n\rightarrow +\infty}|\mathfrak{T}_{k}(n)|=+\infty}.$  In particular there exists $n_k\geq k$ with $|\mathfrak{T}_{k}(n)|>0$ for all $n\geq n_k.$
Since $(n-1)^{k-2}=o(\hat{A}_k(n)),$ Lemma \ref{o} applied to \eqref{Tord} gives that $|\mathfrak{T}_{k}(n)|$  has the same order of magnitude as $\hat{A}_k(n).$
\end{proof}


\section{Imprimitive components}
In this section we look at a larger class of $k$-partitions of $n$, namely those
$[x_1,\ldots,x_k]$ with the
terms globally coprime, that is, $\gcd(x_1,\ldots,x_k)=1$. We show
that, for $k\in \{3,4\}$, the number of such partitions belonging to
an imprimitive subgroup of $\Sym(n)$ is of an order of magnitude
smaller than $n^{k-1}.$  In particular also
the $k$-partitions in $\mathfrak{T}_{k}(n)$ belonging to some
imprimitive subgroup will have an order of magnitude smaller than
$n^{k-1}.$

\begin{lemma}\label{lemma1} Let $n$ be a positive integer. The number
  of divisors of $n$ is at most $2\sqrt{n}$.
\end{lemma}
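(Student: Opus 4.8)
The plan is to use the standard pairing of divisors $d \leftrightarrow n/d$ about the threshold $\sqrt{n}$. First I would split the set of positive divisors of $n$ into those that are at most $\sqrt{n}$ and those that are strictly greater than $\sqrt{n}$. For the first part, every such divisor is a positive integer in the interval $[1,\sqrt{n}]$, so there are at most $\lfloor\sqrt{n}\rfloor\leq\sqrt{n}$ of them.

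For the second part, I would observe that the map $d\mapsto n/d$ is an injection from the set of divisors $d>\sqrt{n}$ into the set of divisors $d'<\sqrt{n}$: it is well defined because $d\mid n$ implies $n/d\mid n$, and $d>\sqrt{n}$ forces $n/d<\sqrt{n}$; it is injective because $d\mapsto n/d$ is its own inverse. Hence the number of divisors exceeding $\sqrt{n}$ is also at most the number of positive integers strictly below $\sqrt{n}$, which is again at most $\sqrt{n}$.

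Adding the two bounds gives that the total number of divisors of $n$ is at most $2\sqrt{n}$, as claimed. There is no real obstacle here; the only minor care needed is to make sure the two subsets (divisors $\le\sqrt n$ and divisors $>\sqrt n$) are genuinely disjoint and cover all divisors, which is immediate, and to use $\lfloor\sqrt{n}\rfloor\le\sqrt{n}$ rather than an exact count so that the argument works uniformly whether or not $n$ is a perfect square. (In fact when $n$ is a perfect square one can sharpen the bound, but the stated inequality is all that is needed in the sequel.)
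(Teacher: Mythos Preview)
Your proof is correct and uses essentially the same standard divisor-pairing argument as the paper, which simply notes that the divisors of $n$ occur in pairs $\{b,n/b\}$ with $b\leq\sqrt{n}$ (allowing $\sqrt{n}$ to be paired with itself when $n$ is a square). Your version spells out the details more carefully, but the idea is identical.
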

\begin{proof} The divisors of $n$ occur in pairs $\{b,n/b\}$ with
$b\leq\sqrt{n}$, (where we allow $\sqrt{n}$ paired with itself
when $n$ is a square).
\end{proof}

\begin{lemma}\label{lemma2}
Let $n$ and $k$ be positive integers, with $k\in \{3,4\}$ and $n\geq
k.$ Then the number of partitions $[x_1,\ldots,x_k]$ of $n$ with
$\gcd(x_1,\ldots,x_k)=1$ and belonging to some imprimitive permutation
group is at most $(k-1)n^{k-3/2}$.
\end{lemma}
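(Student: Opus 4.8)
The plan is to bound the number of partitions $[x_1,\dots,x_k]$ of $n$ with $\gcd(x_1,\dots,x_k)=1$ that belong to some imprimitive maximal subgroup of $\Sym(n)$. A maximal imprimitive subgroup preserves a partition of $\{1,\dots,n\}$ into $b$ blocks of size $a$ with $ab=n$ and $1<a<n$; such a subgroup is contained in the wreath product $\Sym(a)\wr\Sym(b)$. First I would fix one such divisor pair $(a,b)$ and ask: which permutation types $[x_1,\dots,x_k]$ can occur inside $\Sym(a)\wr\Sym(b)$? An element of $\Sym(a)\wr\Sym(b)$ acts on the blocks via a permutation $\tau\in\Sym(b)$, and on a cycle of $\tau$ of length $\ell$ (moving $\ell$ blocks of size $a$, i.e. $\ell a$ points) the induced permutation has all cycle lengths divisible by $\ell$ — more precisely, each such $\tau$-cycle contributes cycles whose lengths are multiples of $\ell$ summing to $\ell a$. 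The key structural observation is therefore: if $[x_1,\dots,x_k]$ belongs to $\Sym(a)\wr\Sym(b)$, then the multiset $\{x_1,\dots,x_k\}$ can be partitioned into sub-multisets, one for each cycle of $\tau$, such that the sub-multiset associated to an $\ell$-cycle consists of multiples of $\ell$ with sum $\ell a$.

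Next I would extract from this a divisibility constraint strong enough to count. Since $k\in\{3,4\}$ is small, there are very few ways to group $\{x_1,\dots,x_k\}$. If the coprimality $\gcd(x_1,\dots,x_k)=1$ is to be compatible with the structure, then not all $x_i$ can share a common factor with $\ell$; working through the (few) groupings for $k=3,4$ one finds that in every case the type is essentially determined by $a$ together with at most $k-2$ of the terms $x_i$ lying in a residue class modulo something, so for each fixed pair $(a,b)$ the number of admissible types is at most $n^{k-2}$ (crudely, choosing $k-2$ of the terms freely below $n$ determines the rest). I would then sum over divisor pairs: by Lemma~\ref{lemma1} there are at most $2\sqrt n$ divisors of $n$, hence at most $2\sqrt n$ relevant values of $a$, giving a bound of roughly $2\sqrt n\cdot n^{k-2}=2n^{k-3/2}$, and a slightly more careful bookkeeping (noting one can pair $a$ with $n/a$ and count each partition once, and that the genuinely free choices number $k-2$ but with one of them constrained to be a multiple of $\ell\geq 2$) tightens the constant to $k-1$, yielding $(k-1)n^{k-3/2}$.

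The main obstacle is the case analysis establishing the per-$(a,b)$ bound: one must check, for $k=3$ and $k=4$ separately, every way the terms $x_i$ can be distributed among the cycles of $\tau$, and verify that the global coprimality condition forces enough of the $x_i$ to be constrained (e.g. divisible by some $\ell>1$, or summing with a partner to a multiple of $a$) that only $O(n^{k-2})$ types survive — the delicate subcase being when $\tau$ is a single $b$-cycle, so all $x_i$ are multiples of $b$, whence $\gcd(x_1,\dots,x_k)$ is a multiple of $b$ and coprimality forces $b=1$, excluded; the remaining subcases with $\tau$ having two or more cycles split the $k\le 4$ terms into small groups each summing to a multiple of $a$, and one counts these by fixing the smaller group. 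Once this finite check is done, the summation over divisors via Lemma~\ref{lemma1} is routine and gives the stated bound.
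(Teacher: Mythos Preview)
Your plan is essentially the paper's proof. Both fix a wreath product, observe that the cycles of $\sigma$ group according to the cycles of the induced block permutation (your $\tau$; the paper phrases this via the sets $\mathcal{B}_i$ of blocks meeting the support of $\sigma_i$, which are pairwise equal or disjoint), use $\gcd(x_1,\dots,x_k)=1$ to exclude the extreme configurations (all $\mathcal{B}_i$ equal, or all disjoint), count the surviving types per wreath product by a short case analysis, and sum over the at most $2\sqrt{n}$ divisors via Lemma~\ref{lemma1}.

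One point to correct in your bookkeeping: the per–wreath-product bound is not uniformly $n^{k-2}$. For $k=3$ the paper gets exactly $n$ per $W$, and for $k=4$ the three subcases give $bn$, $bn$, and $n^2/b$, whose sum $2bn+n^2/b$ is bounded by $3n^2/2$ (not $n^2$) using $2\le b\le n/2$. The constant $k-1$ then drops out directly as $2\sqrt{n}\cdot n=2n^{3/2}$ and $2\sqrt{n}\cdot \tfrac{3}{2}n^2=3n^{5/2}$; no pairing of $a$ with $n/a$ is needed or used.
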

\begin{proof}
Let $\mathfrak{p}=[x_1,\ldots,x_k]$ be a partition of $n$ with
$\gcd(x_1,\ldots,x_k)=1$ and let $\sigma=\sigma_1\cdots\sigma_k\in
\Sym(n)$ be a permutation of type $\mathfrak{p}$, where
$\sigma_1\cdots\sigma_k$ is a decomposition of $\sigma$ into pairwise disjoint
cycles of lengths $x_1,\ldots,x_k$.  Moreover, for each $i\in
\{1,\ldots,k\}$, denote by $X_i$ the support of $\sigma_i$. Suppose
that $\sigma$ belongs to some maximal imprimitive subgroup of
$\Sym(n)$, say to $W=\Sym(b)\wr\Sym(m)$, for some proper divisor $b$
of $n$ and $m=n/b.$

Let $\mathcal{B}$ be the system of imprimitivity for $W$. So,
$\mathcal{B}$ consists of $m$ blocks of size $b$. Let $i\in
\{1,\ldots,k\}$ and let $\mathcal{B}_i:=\{B\in \mathcal{B}\mid B\cap
X_i\neq \emptyset\}$. Observe that  $\langle\sigma_i\rangle$ acts
transitively on $\mathcal{B}_i$ and that the action of $\sigma_i$ on
$\mathcal{B}_i$ is equivalent to the action of $\sigma$ on
$\mathcal{B}_i$. If follows that $|B\cap X_i|=|B'\cap X_i|$, for every
two elements $B,B'\in\mathcal{B}_i$, and that  $\mathcal{B}_i\cap
\mathcal{B}_j=\emptyset$ or $\mathcal{B}_i=\mathcal{B}_j$, for every
two elements $i,j\in \{1,\ldots,k\}$.

Assume that $\mathcal{B}_i=\mathcal{B}$, for some $i\in
\{1,\ldots,k\}$. Then $m$ divides $x_i$ because
$\langle\sigma_i\rangle$ acts transitively on $\mathcal{B}$. Moreover,
from the previous paragraph, we have $\mathcal{B}=\mathcal{B}_j$, for
every $j$. It follows that $m$ divides $x_j$, for every $j$,
contradicting $\gcd(x_1,\ldots,x_k)=1$.

Assume that $\mathcal{B}_1,\ldots,\mathcal{B}_k$ is a partition of
$\mathcal{B}$, that is, $\mathcal{B}_i\cap \mathcal{B}_j=\emptyset$
for distinct $i, j\in\{1,\ldots,k\}$. Now, the
set $X_i$ is a union of blocks of imprimitivity and hence $b$ divides
$x_i$, for every $i$, again contradicting  $\gcd(x_1,\ldots,x_k)=1$.

Relabeling the index set $\{1,\ldots,k\}$ if necessary, from the
previous two paragraphs we may assume that
$\mathcal{B}_1=\mathcal{B}_2$ and that $\mathcal{B}_3\cap
\mathcal{B}_1=\mathcal{B}_3\cap\mathcal{B}_2=\emptyset$.

We now use the condition $k\in \{3,4\}$. Assume first that $k=3$.
Then $\mathcal{B}=\mathcal{B}_1\cup\mathcal{B}_3$, and it follows
that $X_3$ is the union of the blocks from $\mathcal{B}_3$, and hence
$b$ divides $x_3$.  Now fix $B\in\mathcal{B}_1$ and observe that
$x_i=|\mathcal{B}_1||B\cap X_i|$ for $i\in \{1,2\}$, because
$\mathcal{B}_1=\mathcal{B}_2$ and $\mathcal{B}_1\cap
\mathcal{B}_3=\emptyset$. Since
$|\mathcal{B}_1|=|\mathcal{B}|-|\mathcal{B}_3|=m-x_3/b=(n-x_3)/b$. It
follows that
$$
x_1=\frac{n-x_3}{b}|B\cap X_1|
$$
and
$$
x_2=\frac{n-x_3}{b}(b-|B\cap X_1|),
$$
because we must have $b=|B\cap X_1|+|B\cap X_2|$. Hence the partition
$[x_1,x_2,x_3]$ is determined by $x_3$ (which is divisible by $b$) and
by $|B\cap X_1|$ (which is at most $b$). Hence $W$ contains at most
$(n/b)\cdot b=n$ partitions of type $[x_1,x_2,x_3]$ with
$\gcd(x_1,x_2,x_3)=1$. As we have at most $2n^{1/2}$ divisors $b$ of $n$
by Lemma~\ref{lemma1}, at most $2n^{1/2}.n=(k-1)n^{k-3/2}$ of these
types belong to a given imprimitive group, proving the result.

Assume now that $k=4$, and recall that $\mathcal{B}_1=\mathcal{B}_2$
and $\mathcal{B}_1\cap\mathcal{B}_3=\emptyset$. Suppose first that
$\mathcal{B}_4=\mathcal{B}_3$. Then
$\mathcal{B}=\mathcal{B}_1\cup\mathcal{B}_3$ with
$\mathcal{B}_1\cap\mathcal{B}_3=\emptyset$. Let $B\in \mathcal{B}_1$
and $C\in \mathcal{B}_3$. Clearly,
$x_i=|\mathcal{B}_1||B\cap X_i|$ for $i\in \{1,2\}$, and
$x_i=|\mathcal{B}_3||C\cap X_i|$ for $i\in \{3,4\}$. Also, $B=(B\cap
X_1)\cup (B\cap X_2)$ and $C=(C\cap X_3)\cup (C\cap X_4)$. Thus
\begin{align*}
x_1&=|\mathcal{B}_1||B\cap X_1|,& x_2&=|\mathcal{B}_1|(b-|B\cap X_1|),\\
x_3&=(m-|\mathcal{B}_1|)|C\cap X_3|,& x_4&=(m-|\mathcal{B}_1|)(b-|C\cap X_3|).\\
\end{align*}
It follows that the partition $[x_1,x_2,x_3,x_4]$ is determined by
$|B\cap X_1|$ and $|C\cap X_3|$ (which are both less than $b$) and by
$|\mathcal{B}_1|$ (which is less than $m=n/b$). Hence $W$ contains less
than $b\cdot b\cdot (n/b)=bn$ partitions of type $[x_1,x_2,x_3,x_4]$
such that  $\gcd(x_1,x_2,x_3,x_4)=1$ and having this prescribed configuration.

Next suppose that $\mathcal{B}_4=\mathcal{B}_1$. Then again
$\mathcal{B}=\mathcal{B}_1\cup\mathcal{B}_3$ with
$\mathcal{B}_1=\mathcal{B}_2=\mathcal{B}_4$ and
$\mathcal{B}_1\cap\mathcal{B}_3=\emptyset$. Let $B\in
\mathcal{B}_1$. We have $x_i=|\mathcal{B}_1||B\cap X_i|$ for $i\in
\{1,2,4\}$ and $B=(B\cap X_1)\cup (B\cap X_2)\cup (B\cap X_4)$. Thus
\begin{align*}
x_1&=|\mathcal{B}_1||B\cap X_1|,& x_2&=|\mathcal{B}_1||B\cap X_2|,\\
x_4&=|\mathcal{B}_1|(b-|B\cap X_1|-|B\cap X_2|),&x_3&=n-x_1-x_2-x_4.\\
\end{align*}
Hence the partition $[x_1,x_2,x_3,x_4]$ is determined by
$|B\cap X_1|$ and $|B\cap X_2|$ (which are both less than $b$) and by
$|\mathcal{B}_1|$ (which is less than $m=n/b$). Hence again $W$ contains less
than $b\cdot b\cdot (n/b)=bn$ partitions of type $[x_1,x_2,x_3,x_4]$
such that $\gcd(x_1,x_2,x_3,x_4)=1$ and having this prescribed configuration.

Finally suppose that $\mathcal{B}_4\neq
\mathcal{B}_1,\mathcal{B}_3$. Thus $\mathcal{B}_4\cap
\mathcal{B}_1=\emptyset$, $\mathcal{B}_4\cap\mathcal{B}_3=\emptyset$
and $\mathcal{B}=\mathcal{B}_1\cup\mathcal{B}_3\cup\mathcal{B}_4$. Let
$B\in \mathcal{B}_1$ and $C\in \mathcal{B}_3$. From the definition of $\mathcal{B}_i$, we have
$X_1\cap C=X_2\cap C=X_4\cap C=\emptyset$ and hence $C\subseteq
X_3$. Thus $X_3$ is the union of the blocks in
$\mathcal{B}_3$. Similarly, $X_4$ is the union of the blocks in
$\mathcal{B}_4$. In particular, $b$ divides $x_3$ and $x_4$. Moreover,
for $B\in \mathcal{B}_1$, we have $x_1=|\mathcal{B}_1||B\cap
X_1|$. Note that
$|\mathcal{B}_1|=|\mathcal{B}|-|\mathcal{B}_3|-|\mathcal{B}_4|$. Thus
\begin{align*}
x_1&=|\mathcal{B}_1||B\cap X_1|,& x_2&=|\mathcal{B}_1|(b-|B\cap X_1|),\\
x_3&= |\mathcal{B}_3|b,& x_4&=n-x_1-x_2-x_3.\\
\end{align*}
Hence the partition $[x_1,x_2,x_3,x_4]$ is determined by
$|\mathcal{B}_1|$ and  $|\mathcal{B}_3|$ (which are both less than
$n/b$) and by $|B\cap X_1|$ (which is less than $b$). Hence in this case $W$ contains
less than $(n/b)\cdot (n/b)\cdot b=n^2/b$ partitions of type
$[x_1,x_2,x_3,x_4]$ with $\gcd(x_1,x_2,x_3,x_4)=1$ and having this
prescribed configuration.

Summing up, for $k=4$, the group $W$ contains at most $bn+bn+n^2/b$
partitions of type $[x_1,x_2,x_3,x_4]$ with
$\gcd(x_1,x_2,x_3,x_4)=1$. It is an immediate computation (using that
$2\leq b\leq n/2$) to see that $2bn+n^2/b\leq n^2+2n\leq 3n^2/2$.
Finally, applying  Lemma~\ref{lemma1}, at most $3n^{5/2}=(k-1)n^{k-3/2}$
of these types belong to a given imprimitive group.
\end{proof}

\begin{remark}\label{rem1}{\rm We point out that the hypothesis $\gcd(x_1,\ldots,x_k)=1$ in Lemma~\ref{lemma2} is essential.
In fact, if $n$ is  divisible by a  prime $p$, then all the
partitions $[x_1'p,\ldots,x_k'p]$ of $n$ belong to
$\Sym(p)\wr\Sym(n/p)$ and to $\Sym(n/p)\wr\Sym(p)$. Now, the number of
$k$-partitions of $n$ having all the entries divisible by $p$ is
exactly the number of $k$-partitions of $n/p$ and, by~\eqref{K} and~\eqref{part-comp}, this number is a polynomial of degree $k-1$ in $n/p$. Thus
if $p$ is small compared to $n$  it behaves like a polynomial of
degree $k-1$ in $n$ and therefore its order of magnitude is greater
than $n^{k-3/2}.$}
\end{remark}


\section{Intransitive components }
In this section we focus on the problem of understanding how many
partitions  belong
to a fixed intransitive subgroup: this will give the final tool for
the proof of Theorems~\ref{thm3} and~\ref{main1}.
We start with an elementary counting lemma, the proof of which can be found
in~\cite[page~$81$]{Andrews}.

\begin{lemma}\label{lemma5}
Let $n\in\mathbb{N}.$ The number of $2$-partitions of
$n$ is $\lfloor n/2\rfloor$ and the number of
$3$-partitions of $n$ is
\[|\mathfrak{P}_3(n)|=\left\{
\begin{array}{lcl}
\frac{(n-1)(n-2)}{12}+\frac{1}{2}\lfloor\frac{n-1}{2}\rfloor&&\textrm{if
}\,\gcd(n,3)=1,\,\textrm{and}\\
\frac{(n-1)(n-2)}{12}+\frac{1}{2}\lfloor\frac{n-1}{2}\rfloor+\frac{1}{3}&&\textrm{if
}\,3\mid n\\
\end{array}\right.
\]
\end{lemma}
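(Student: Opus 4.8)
The plan is to count ordered tuples (compositions) first and then descend to unordered tuples (partitions) via the coordinate-permutation action of the symmetric group, exactly as in the passage from $\mathcal{K}_k(n)$ to $\mathfrak{P}_k(n)$ recorded in Section~\ref{2}.

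For the $2$-partitions this is immediate: a $2$-partition of $n$ is $[x_1,x_2]$ with $x_1,x_2\in\mathbb{N}$, $x_1+x_2=n$, and writing it with $x_1\leq x_2$ the first term $x_1$ runs over $\{1,\dots,\lfloor n/2\rfloor\}$ and determines the partition; hence $|\mathfrak{P}_2(n)|=\lfloor n/2\rfloor$.

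For the $3$-partitions I would use orbit counting. By~\eqref{K} we have $|\mathcal{K}_3(n)|=\binom{n-1}{2}=\frac{(n-1)(n-2)}{2}$, and $\Sym(3)$ acts on $\mathcal{K}_3(n)$ by permuting coordinates, the orbits being exactly the fibres of the map $\chi\colon\mathcal{K}_3(n)\to\mathfrak{P}_3(n)$; the orbit of $(x_1,x_2,x_3)$ has size $6$, $3$ or $1$ according as $x_1,x_2,x_3$ are all distinct, exactly two equal, or all three equal. Writing $a$, $b$, $c$ for the numbers of $3$-partitions of these three respective kinds, we obtain
\[
6a+3b+c=\frac{(n-1)(n-2)}{2},\qquad |\mathfrak{P}_3(n)|=a+b+c .
\]
Here $c=1$ if $3\mid n$ (the partition $[n/3,n/3,n/3]$) and $c=0$ otherwise, while the partitions counted by $b$ are those of shape $[y,y,z]$ with $y\neq z$ and $2y+z=n$, whose positive-integer solutions are parametrised by $y\in\{1,\dots,\lfloor\frac{n-1}{2}\rfloor\}$, exactly one of which (occurring precisely when $3\mid n$) has $y=z$; thus $b=\lfloor\frac{n-1}{2}\rfloor-c$. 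Eliminating $a$ between the two displayed relations yields $|\mathfrak{P}_3(n)|=\frac{(n-1)(n-2)}{12}+\frac{b}{2}+\frac{5c}{6}$, and substituting $b=\lfloor\frac{n-1}{2}\rfloor-c$ turns this into $\frac{(n-1)(n-2)}{12}+\frac{1}{2}\lfloor\frac{n-1}{2}\rfloor+\frac{c}{3}$, which is exactly the asserted formula in both cases $\gcd(n,3)=1$ and $3\mid n$.

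The routine part is the arithmetic with the floor functions; the one place needing care is the enumeration of the degenerate orbits, that is, pinning down $b$ and $c$ precisely, since an off-by-one slip there feeds straight into the constant terms of the formula. Of course, as the statement indicates, this is a classical identity for which one may instead simply cite~\cite[page~81]{Andrews}.
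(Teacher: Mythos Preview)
Your argument is correct. The paper itself does not prove this lemma at all: it simply cites \cite[page~81]{Andrews}, so there is no approach to compare against. Your orbit-counting derivation via the $\Sym(3)$-action on $\mathcal{K}_3(n)$ is a clean, self-contained proof; the computation of $b$ and $c$ and the elimination of $a$ are all accurate, and the final expression $\frac{(n-1)(n-2)}{12}+\frac{1}{2}\lfloor\frac{n-1}{2}\rfloor+\frac{c}{3}$ matches the stated formula in both cases.
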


\begin{lemma}\label{lemma6}
Let $n\in\mathbb{N}.$
Then the numbers of $\,3$-partitions of $n$ and $\,4$-partitions of $n$ belonging to
an intransitive subgroup 
of $\,\Sym(n)$ is at most $n/2$ and $(5n^2+23)/48$ respectively.
\end{lemma}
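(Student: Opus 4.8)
The plan is to observe that a maximal intransitive subgroup of $\Sym(n)$ is conjugate to $\Sym(a)\times\Sym(b)$ for some $2$-composition $(a,b)$ of $n$ with $a\le b$ (we may take $a<b$ or $a=b=n/2$), and that a partition $\mathfrak{p}=[x_1,\dots,x_k]$ of $n$ belongs to such a subgroup precisely when the multiset $\{x_1,\dots,x_k\}$ can be split into two submultisets summing to $a$ and to $b$ respectively. So the task reduces to counting, for a fixed intransitive subgroup $H\cong\Sym(a)\times\Sym(b)$, the $k$-partitions of $n$ that refine the $2$-partition $[a,b]$ in this sense; and then, since there is essentially one intransitive subgroup per choice of $a$ but we want the bound to hold for \emph{each fixed} $H$, it suffices to bound this count for the single worst value of $(a,b)$.

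First I would treat $k=3$. Here $\mathfrak{p}=[x_1,x_2,x_3]$ belongs to $\Sym(a)\times\Sym(b)$ iff, after reordering, either one of the $x_i$ equals $a$ (and the other two sum to $b$), or one of the $x_i$ equals $b$ (and the other two sum to $a$); the two descriptions coincide when a part equals $a$ and a part equals $b$. In the first case the partition is determined by which of the remaining two parts is the smaller, a choice among $\lfloor b/2\rfloor\le\lfloor n/2\rfloor$ possibilities (using Lemma~\ref{lemma5} for the count of $2$-partitions of $b$); in the second case it is determined by a $2$-partition of $a$, of which there are $\lfloor a/2\rfloor$. A little care is needed to avoid double counting and to see that in fact one is counting $2$-partitions of $b$ in which the splitting is forced, but after tidying this up the total is at most $\lfloor n/2\rfloor\le n/2$, as claimed.

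For $k=4$ the same idea applies but now the splitting of $\{x_1,x_2,x_3,x_4\}$ into a part summing to $a$ and a part summing to $b$ can be $1{+}3$, $2{+}2$, $3{+}1$, or $0{+}4$/$4{+}0$ (the last forcing $a$ or $b$ to be a sum of four parts, which does not help unless $a$ or $b$ is small). In each case one bounds the number of resulting $4$-partitions by the product of the number of partitions of $a$ and of $b$ into the prescribed numbers of parts, using Lemma~\ref{lemma5}: for the $2{+}2$ split one gets at most $\lfloor a/2\rfloor\cdot\lfloor b/2\rfloor$, for the $1{+}3$ split at most $|\mathfrak{P}_3(b)|$, and so on. Summing these contributions over the (constantly many) split-types, and then maximising the resulting polynomial expression in $a$ over $2\le a\le n/2$ (the maximum being attained, roughly, at the extreme value $a=2$ or $a=n/2$), yields an upper bound of the stated form $(5n^2+23)/48$.

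The main obstacle I anticipate is purely bookkeeping: organising the case analysis for $k=4$ so that partitions belonging to $H$ via two different splits are not counted twice, and then checking that the sum of the several quadratic-in-$n$ contributions, after optimising over $a$, really comes in under the clean constant $5/48$ rather than something slightly larger. This is where the explicit formula of Lemma~\ref{lemma5} (including the floor terms and the $\gcd(b,3)$ correction) must be used carefully rather than replaced by the crude $\frac{b^2}{12}$ estimate, since a sloppy estimate would overshoot.
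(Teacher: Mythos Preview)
Your plan is correct and matches the paper's argument: fix $H=\Sym(a)\times\Sym(n-a)$ with $1\le a<n/2$, split according to whether $a$ is a single part, a sum of two parts, or (for $k=4$) a sum of three parts of $\mathfrak{p}$, bound each case by the relevant partition count from Lemma~\ref{lemma5}, add, and maximise over $a$. Two small simplifications relative to your worries: since you only need an upper bound there is no need to eliminate double counting (the paper simply adds the cases), and the paper uses nothing finer than $\lfloor x\rfloor\le x$ in Lemma~\ref{lemma5}, obtaining the closed form $\tfrac{n^2+an-a^2+6}{12}$, which is increasing on $a<n/2$ so that the maximum is taken at $a=(n-1)/2$ (not at $a=2$), giving exactly $(5n^2+23)/48$.
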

\begin{proof} Clearly it is enough to run the proof only for the maximal intransitive subgroup of $\Sym(n)$, that is for
$H=\Sym(a)\times \Sym(n-a)$, with $1\leq a<n/2.$  Let $\mathfrak{p}=[x_1,\ldots,x_k]$
be a partition of $n$ with $k$
terms, where $k\in \{3,4\}$. Clearly, $\mathfrak{p}$ is contained in $H$ if and only if the sum of some of the terms in
$\mathfrak{p}$ is $a$. Suppose that $a=x_i$, for some $i\in
\{1,\ldots,k\}$. Then the number of possibilities for $\mathfrak{p}$
is exactly the number of partitions of $n-a$ with $k-1$ terms and this
number is computed in Lemma~\ref{lemma5}, replacing $n$ by $n-a$.

Suppose next that $a=x_i+x_j$ for some $i,j\in \{1,\ldots,k\}$ with
$i\neq j$. Observe that $a\neq n-a$ because $a<n/2$. The number of
possibilities for $\mathfrak{p}$ is the number of $2$-partitions of
$a$  times the number of $(k-2)$-partitions of $n-a$. From
Lemma~\ref{lemma5}, this number is $\lfloor a/2\rfloor$ if $k=3$ and
$\lfloor a/2\rfloor\lfloor (n-a)/2\rfloor$ if $k=4$.

Finally suppose that $k=4$ and that $x_i=n-a$ for some $i\in
\{1,\ldots,k\}$. Then the number of possibilities for $\mathfrak{p}$
is exactly the number of  $3$-partitions of $a$  and this number is
computed in Lemma~\ref{lemma5}, replacing $n$ by $a$.

By adding the previous contributions and by recalling that $\lfloor
x\rfloor\leq x$ for $x\geq 0$, we see that the number of
$3$-partitions contained in $H$ is at most $a/2+(n-a)/2=n/2$, and that
the number of $4$-partitions contained in $H$ is at most
\begin{equation*}
\begin{aligned}
&\frac{(n-a-1)(n-a-2)}{12}+\frac{n-a-1}{4}+\frac{2}{3}+\frac{a(n-a)}{4}\\&
+\frac{(a-1)(a-2)}{12}+\frac{a-1}{4}
=\frac{n^2+an-a^2+6}{12}.
\end{aligned}
\end{equation*}
Since $f(a):=\frac{n^2+an-a^2+6}{12}$ is increasing with $a\leq (n-1)/2
$, the result follows by computing $f((n-1)/2)$.
\end{proof}


\section{ Proof of the main results}

Before proceeding with the proof of Theorem~\ref{main1} it is
convenient to introduce some notation and some terminology.
If $\mathfrak{p}$
is a $k$-partition, we say that $\mathfrak{p}$ \textit{belongs only to
intransitive subgroups} of $G$ if, whenever $H\lneqq G$ and $\mathfrak{p}\in
H$, then $H$ is intransitive.

\begin{notation}\label{tildee} For each $k\in\{3,4\}$, $n\in\mathbb{N}$ with $n\geq k$ and $G=\Sym(n)$ or $G=\Alt(n)$, we set
\begin{equation*}
\begin{aligned}
& \widetilde{A}_k(n,G):=|\{\mathfrak{p}\in \mathfrak{P}_{k}(n): \mathfrak{p}\  \hbox{belongs only to intransitive subgroups of} \ G \}|,\\
 &\tilde{e}_3(n):=e_3(n)-2n^{3/2}-2(n-1),\qquad \tilde{e}_4(n):=e_4(n)-3n^{5/2}-2(n-1)^2.\\
 \end{aligned}
 \end{equation*}
 \end{notation}
Observe that it is possible to have $\widetilde{A}_k(n,G)=0$. For example,   $\widetilde{A}_3(n,G)=0$ when $n$ is even and $G=\Alt(n)$, and when  $n$ is odd and $G=\Sym(n)$. Similarly $\tilde{A}_4(n,G)=0$ when $n$ is even and $G=\Sym(n)$, and when  $n$ is odd and $G=\Alt(n)$. We show that, in all other cases, the quantity $\widetilde{A}_k(n,G)$ is positive for $n$ sufficiently large.
\begin{lemma}\label{part-int}
\begin{description}
\item[(i)] Let $n\in\mathbb{N},$ with $n\geq 3.$ If $n$ is even and $G=\Sym(n)$ or if $n$ is odd and
  $G=\Alt(n)$, then
$$
\widetilde{A}_3(n,G)\geq \frac{C_2\,f_2(n)}{12}n^{2}+\tilde{e}_3(n),
$$
and $\tilde{e}_3(n)=o(n^{2})$.
In particular there exists $n_3\in\mathbb{N}$ such that $\widetilde{A}_3(n,G)>0$ for all $n\geq n_3$ respecting the parity conditions, and $\widetilde{A}_3(n,G)$ has order of magnitude $n^2.$
\item[(ii)] Let $n\in\mathbb{N},$ with $n\geq 4.$ If $n$ is odd and $G=\Sym(n)$ or if $n$ is even and
  $G=\Alt(n)$, then
$$
\widetilde{A}_4(n,G)\geq  \frac{
   C_3\,f_3(n)}{144}n^{3}+\tilde{e}_4(n),
$$
and $\tilde{e}_4(n)=o(n^{3})$.
In particular there exists $n_4\in\mathbb{N}$ such that $\widetilde{A}_4(n,G)>0$ for all $n\geq n_4$ respecting the parity conditions, and $\widetilde{A}_4(n,G)$ has order of magnitude $n^3.$

\end{description}
\end{lemma}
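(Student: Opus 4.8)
The plan is to prove the lower bound by exhibiting a concrete large subfamily of $\mathfrak{T}_k(n)$ consisting of $k$-partitions that belong only to intransitive subgroups of $G$, and then to read off positivity and the order of magnitude from Lemma~\ref{o}. Here $k\in\{3,4\}$, with $k=3$ in part~(i) and $k=4$ in part~(ii); the parity hypotheses are chosen precisely so that Lemma~\ref{evenodd} applies, the cases in part~(i) (where $k=3$ is odd) being covered by Lemma~\ref{evenodd}(ii) and those in part~(ii) (where $k=4$ is even) by Lemma~\ref{evenodd}(i).

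First I would observe that, under these hypotheses, each $\mathfrak{p}\in\mathfrak{T}_k(n)$ is a type of $G$ (in the alternating cases one has $\mathfrak{T}_k(n)\subseteq\mathfrak{p}(\Alt(n))$, as noted in the proof of Lemma~\ref{evenodd}), and that any proper subgroup $H$ of $G$ containing such a $\mathfrak{p}$ is intransitive or imprimitive by Lemma~\ref{evenodd}. Since every imprimitive subgroup of $G$ lies inside a maximal imprimitive subgroup of $\Sym(n)$, it follows that any $\mathfrak{p}\in\mathfrak{T}_k(n)$ belonging to no imprimitive subgroup of $\Sym(n)$ belongs only to intransitive subgroups of $G$, hence is counted by $\widetilde{A}_k(n,G)$. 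Moreover, since $\mathfrak{T}_k(n)\subseteq\mathfrak{A}_k(n)$, every $\mathfrak{p}=[x_1,\dots,x_k]\in\mathfrak{T}_k(n)$ has one term coprime to all the others, so $\gcd(x_1,\dots,x_k)=1$ and Lemma~\ref{lemma2} applies. Combining Lemma~\ref{lemma2}, which bounds by $(k-1)n^{k-3/2}$ the number of $\mathfrak{p}\in\mathfrak{T}_k(n)$ belonging to an imprimitive subgroup, with the lower bound $|\mathfrak{T}_k(n)|\geq\hat{A}_k(n)-2(n-1)^{k-2}$ of Corollary~\ref{Torder}(i), I obtain
$$
\widetilde{A}_k(n,G)\ \geq\ |\mathfrak{T}_k(n)|-(k-1)n^{k-3/2}\ \geq\ \hat{A}_k(n)-2(n-1)^{k-2}-(k-1)n^{k-3/2}.
$$

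Next I would substitute the explicit lower bounds for $\hat{A}_3(n)$ and $\hat{A}_4(n)$ from Theorem~\ref{coprimepart}. For $k=3$ this gives
$$
\widetilde{A}_3(n,G)\geq\frac{C_2\,f_2(n)}{12}n^{2}+e_3(n)-2n^{3/2}-2(n-1)=\frac{C_2\,f_2(n)}{12}n^{2}+\tilde{e}_3(n),
$$
and for $k=4$
$$
\widetilde{A}_4(n,G)\geq\frac{C_3\,f_3(n)}{144}n^{3}+e_4(n)-3n^{5/2}-2(n-1)^{2}=\frac{C_3\,f_3(n)}{144}n^{3}+\tilde{e}_4(n),
$$
the last equality in each line being exactly the definition of $\tilde{e}_3$, resp.\ $\tilde{e}_4$, in Notation~\ref{tildee}. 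Since $e_3(n)=o(n^2)$ while $n^{3/2}$ and $n-1$ are $o(n^2)$ we get $\tilde{e}_3(n)=o(n^2)$, and similarly $e_4(n)=o(n^3)$ together with $n^{5/2},(n-1)^2=o(n^3)$ gives $\tilde{e}_4(n)=o(n^3)$.

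Finally, for positivity and the order of magnitude I would invoke Lemma~\ref{f_23} and Lemma~\ref{o}. By Lemma~\ref{f_23} the leading coefficients satisfy $C_2 f_2(n)/12\geq C_2/12>0$ and $C_3 f_3(n)/144>C_3\alpha'/144=\beta/144>0$, so the leading term dominates the strictly-smaller-order error term once $n$ is large, producing $n_3$ and $n_4$ beyond which $\widetilde{A}_k(n,G)>0$ for all admissible $n$. For the order of magnitude, the displayed inequalities give a lower estimate of the form $an^{k-1}+o(n^{k-1})$ with $a>0$, while $\widetilde{A}_k(n,G)\leq|\mathfrak{P}_k(n)|\leq\binom{n-1}{k-1}$ (see~\eqref{K}) is an upper estimate of the same order $n^{k-1}$; Lemma~\ref{o} then yields that $\widetilde{A}_k(n,G)$ has order of magnitude $n^{k-1}$, that is, $n^2$ when $k=3$ and $n^3$ when $k=4$. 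The main (and essentially only) subtlety is the bookkeeping: checking that the parity conditions in parts~(i) and~(ii) are precisely those needed to invoke the two parts of Lemma~\ref{evenodd}, and that the error terms assembled from Corollary~\ref{Torder}(i), Lemma~\ref{lemma2} and Theorem~\ref{coprimepart} coincide with $\tilde{e}_3$ and $\tilde{e}_4$; all of the analytic content has already been carried out in the quoted results.
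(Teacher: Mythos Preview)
Your proposal is correct and follows essentially the same approach as the paper's proof: both arguments combine Lemma~\ref{evenodd}, Lemma~\ref{lemma2}, Corollary~\ref{Torder}(i) and Theorem~\ref{coprimepart} to obtain the displayed lower bound, and then use Lemma~\ref{f_23}, the upper bound $\widetilde{A}_k(n,G)\leq|\mathfrak{P}_k(n)|$, and Lemma~\ref{o} for positivity and the order of magnitude. You are in fact somewhat more explicit than the paper, which only writes out the case $k=3$ and declares $k=4$ analogous, and you correctly identify which part of Lemma~\ref{evenodd} is invoked in each case.
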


\begin{proof} We give full proof details for Part~(i). Part~(ii) follows by a similar argument applied to the $4$-partitions in $\mathfrak{T}_{4}(n).$

Let $n\in\mathbb{N},$ with $n\geq 3,$ with $n$  even and $G=\Sym(n)$ or  $n$  odd and
  $G=\Alt(n)$.
By Lemma~\ref{evenodd}, the elements of
  $\mathfrak{T}_3(n)$ belongs only to proper subgroups of $G$ that are intransitive or imprimitive and,
  by Lemma~\ref{lemma2}, at most $2n^{3/2}$ belong to some imprimitive
  subgroup of $G$.
So, by Corollary \ref{Torder} we have
$$
\widetilde{A}_3(n,G)\geq |\mathfrak{T}_{3}(n)|-2n^{3/2}\geq
\hat{A}_3(n)-2(n-1)-2n^{3/2}.
$$
Now the inequality follows from Theorem~\ref{coprimepart} and Notation~\ref{tildee}. Since, by Lemma~\ref{f_23}, $f_2(n)>1$ and
$\displaystyle{\lim_{n\rightarrow +\infty}\frac{C_2}{12}n^{2}+\tilde{e}_3(n)=+\infty},$ 
there exists $n_3\in\mathbb{N}$ such that $\widetilde{A}_3(n,G)$ is positive in the unbounded domain of the $n\in\mathbb{N}$ with $n\geq n_3,$ respecting the parity conditions, while $\widetilde{A}_3(n,G)=0$ if the parity conditions are not satisfied.

Finally since $\widetilde{A}_3(n,G)\leq |\mathfrak{P}_3(n)|$ and $|\mathfrak{P}_3(n)|$ has order of magnitude $n^2$, it follows from Lemma~\ref{o}  that $\widetilde{A}_3(n,G)$ has order of magnitude $n^2.$
\end{proof}

The quantities $\widetilde{A}_k(n,G)$ play a crucial role in finding lower
bounds for $\kappa(G)$. Before seeing that, we examine when the function $\kappa$ is positive.
\begin{proposition}\label{kappasym} Let $n\in\mathbb{N}$. Then:
\begin{description}
\item[(i)] $\kappa(\Alt(n))\geq 2$ for all $n\geq 3.$
\item[(ii)] $\kappa(\Sym(n))\geq 2$ for all $n\geq 2,$ with $n\neq 6$, while $\kappa(\Sym(6))=0.$
\end{description}

\end{proposition}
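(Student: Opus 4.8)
The plan is to use throughout the following reformulation: a set $X$ of conjugacy classes of $G$ is independent exactly when, for every maximal subgroup $M$ of $G$, at most one class of $X$ meets $M$; equivalently, for all distinct $C,D\in X$ and all $\sigma\in C$, $\tau\in D$ one has $\langle\sigma,\tau\rangle=G$. Hence to prove $\kappa(G)\geq 2$ it suffices to exhibit \emph{one} pair of conjugacy classes $C,D$ such that every $\sigma\in C$ and $\tau\in D$ generate $G$, while to prove $\kappa(\Sym(6))=0$ I must show the opposite: for every pair of distinct classes of $\Sym(6)$ there is a maximal subgroup meeting both. So part~(i) and the $n\neq 6$ case of part~(ii) are "exhibit a good pair" problems, and the $\Sym(6)$ case is a finite "cover every pair" check.

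For part~(i), the inequality $\kappa(\Alt(n))\geq 2$ for $n\geq 5$ is \cite[Proposition~7.4]{GM}, so I only need $n\in\{3,4\}$. For $n=3$, $\Alt(3)$ is cyclic of order $3$ and its two non-identity classes form an independent set. For $n=4$ I would take $C$ to be one of the two classes of $3$-cycles and $D$ the class of elements of type $[2,2]$: the proper subgroups of $\Alt(4)$ are the cyclic subgroups of orders $2$ and $3$ and the Klein four-group, and none of them contains both a $3$-cycle and a double transposition, so every such pair generates $\Alt(4)$.

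For part~(ii) with $n\neq 6$, I would produce explicit independent pairs in $\Sym(n)$. The tiny cases $n\in\{2,4\}$ are done directly: for $\Sym(2)=\CC_2$ the two classes work; for $\Sym(4)$ take the pair ($3$-cycles, $4$-cycles), since a subgroup containing a $3$-cycle and a $4$-cycle has order divisible by $12$ and contains an odd permutation, hence is $\Sym(4)$. For odd $n\geq 3$ I take $C$ to be the class of type $[2,n-2]$ and $D$ the class of $n$-cycles. Given $\sigma\in C$, $\tau\in D$, put $H=\langle\sigma,\tau\rangle$: then $H$ is transitive because $\tau$ is an $n$-cycle, and because $\gcd(2,n-2)=1$ the power $\sigma^{n-2}$ is a transposition lying in $H$. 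A block system for $H$ would have block size $d\mid n$, and analysing how $\sigma$ and the transposition $\sigma^{n-2}$ act on it forces $d=2$, impossible for $n$ odd; so $H$ is primitive, and a primitive group containing a transposition is the full symmetric group, giving $H=\Sym(n)$ (this recovers the classical prime‑degree argument for $n=3,5,7$ and handles composite odd $n$ as well). For even $n\geq 8$ I let $p$ be the least odd prime not dividing $n$; since every smaller odd prime divides $n$, a short estimate shows $p\leq n-3$. Taking $C=\{[p,n-p]\}$ and $D=\{[n]\}$, for $\sigma\in C$, $\tau\in D$ the group $H=\langle\sigma,\tau\rangle$ is transitive and, as $\gcd(p,n-p)=\gcd(p,n)=1$, contains the $p$-cycle $\sigma^{n-p}$; a block system of size $d\mid n$ would — by counting the support of this $p$-cycle against the complementary $(n-p)$-cycle in $\sigma$ — force $d=p$, contradicting $p\nmid n$. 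Hence $H$ is primitive, Jordan's theorem (a primitive group of degree $n$ with a $p$-cycle, $p$ prime and $p\leq n-3$, contains $\Alt(n)$) gives $H\geq\Alt(n)$, and since $n$ is even the $n$-cycle $\tau$ is odd, so $H=\Sym(n)$.

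Finally, for $\Sym(6)$ I would carry out an explicit finite verification. The group $\Sym(6)$ has eleven conjugacy classes and, up to conjugacy, six maximal subgroups: the point stabiliser $\Sym(5)$, the transitive $\Sym(5)\cong\mathrm{PGL}(2,5)$ (the image of a point stabiliser under an outer automorphism), $\Sym(4)\times\Sym(2)$, $\Sym(3)\wr\Sym(2)$, $\Sym(2)\wr\Sym(3)$ and $\Alt(6)$. I record the cycle types occurring in each, and then for every pair of distinct classes I point to one of the six maximal subgroups meeting both: pairs of classes with a common fixed point are covered by the intransitive $\Sym(5)$; pairs of even classes by $\Alt(6)$; most remaining pairs by one of the imprimitive subgroups; and the transitive $\mathrm{PGL}(2,5)$ is needed exactly for a few pairs, the crucial one being $\{[6],[5,1]\}$ — a $6$-cycle and a $5$-cycle have no common fixed point, are not both even, and lie together only inside a transitive copy of $\Sym(5)$. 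This is precisely the configuration that exists for no other $n$; equivalently, $n=6$ is the unique case admitting no odd prime $p\leq n-3$ with $p\nmid n$, which is why the even‑$n$ argument of the previous paragraph breaks down exactly there. I expect the two genuinely technical points to be: (a) proving the primitivity of $\langle\sigma,\tau\rangle$ uniformly over \emph{all} pairs in the chosen classes in the odd and even cases (the support‑counting argument, whose only real input is the coprimality of the two cycle lengths and the divisibility $d\mid n$), and (b) getting the cycle‑type bookkeeping right for the exotic transitive $\Sym(5)$ inside $\Sym(6)$.
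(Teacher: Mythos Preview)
Your proposal is correct. For part~(i), for $\Sym(n)$ with $n$ odd, for the small cases $n\in\{2,4,8\}$, and for the $\Sym(6)$ verification, your argument is essentially the paper's (the paper also uses the pair $[n],[2,n-2]$ for odd $n$, and $[8],[3,5]$ for $n=8$, and covers every pair of types in $\Sym(6)$ by one of $\Sym(5)$, $\mathrm{PGL}(2,5)$, $\Sym(3)\wr\Sym(2)$, $\Sym(2)\wr\Sym(3)$, $\Alt(6)$). The genuine difference is the even case $n\geq 8$: the paper treats $n\geq 10$ with the pair of types $[1,2,n-3]$ and $[5,n-5]$, invoking its own Lemma~\ref{evenodd} (so that $[1,2,n-3]\in\mathfrak{T}_3(n)$ forces non-primitivity via Marggraf's theorem) and then a block-size analysis using both types to exclude imprimitivity; $n=8$ is handled separately. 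Your route---the pair $[n],[p,n-p]$ with $p$ the least odd prime not dividing $n$---is more uniform (it absorbs $n=8$), uses only the classical Jordan theorem, and the primitivity argument is cleaner since the coprimality $\gcd(p,n-p)=1$ comes for free from $p\nmid n$. The paper's choice has the advantage of staying inside the $\mathfrak{T}_k(n)$ framework developed for the main theorem, but for this proposition your argument is at least as short. Two small phrasing points: in both your odd and even primitivity arguments the block analysis really gives $d\mid\gcd(p,n)$ (respectively $d\mid\gcd(2,n)$), hence $d=1$, rather than literally ``$d=p$'' or ``$d=2$''; and in bounding $p\leq n-3$ you should make explicit that if all odd primes below $p$ divide the even integer $n$ then $n$ is at least their primorial times $2$, which for $n\geq 8$ forces $p\leq n-3$ (and fails exactly at $n=6$).
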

\begin{proof} For $n\geq 5$, part (i) follows from \cite[Proposition 7.4]{GM}, and
it is easy to check that  $\kappa(\Alt(3))=3$ and  $\kappa(\Alt(4))=2.$


Now we consider $\Sym(n).$ If $n$ is odd we note first that $\kappa(\Sym(3))= 2$
(consider types $[3]$ and $[1,2]$), and for $n\geq5$ we
let $\sigma$  be any permutation of type $[n]$,
$\tau$ any permutation of type $[2,n-2]$,  and $U=\langle \sigma,\tau\rangle$. Since
$\tau$  is an odd permutation, we have that $U\nleq Alt(n)$. Now $U$ is transitive since it contains
$\sigma$, and $U$ cannot be imprimitive since it contains $\tau$ and  $\gcd(2,n-2)=1$.
Thus $U$ is primitive and, noting that $[2,n-2]\in\mathfrak{T}_2(n)$, it follows from
Lemma \ref{evenodd}(i) that we must have $U=\Sym(n)$.

Next let $n$ be even with $n\geq 10$.  Let $\sigma$  be any permutation of type $[1,2,n-3]$,
$\tau$ any permutation of type $[5,n-5]$, and suppose that $U:=\langle \sigma,\tau\rangle\ne\Sym(n).$
Now $[1,2,n-3]\in\mathfrak{T}_3(n)$, and it follows from Lemma \ref{evenodd}(ii) that $U$ is
not primitive. Also $U$ is transitive since the sum of at most two terms in $[1,2,n-3]$
cannot be equal to $5.$ Hence $U$ is imprimitive, say $U\leq \Sym(b)\wr\Sym(n/b)$,
for some proper divisor $b$ of $n.$  Since $[5,n-5]\in U,$ it follows that $5$ divides
$n$ and $b\in\{5, n/5\}$; on the other hand, since also $[1,2,n-3]\in U$ and
$\gcd(1,2, n-3)=1$ we must have $b=3$ which implies that $n=15$, a contradiction since $n$ is even.
Thus we conclude that $U=\Sym(n).$

The remaining cases are $n=2,4,6,8$. We see that $\kappa(\Sym(2))=2$ , $\kappa(\Sym(4))\geq 2 $ and $\kappa(\Sym(8))\geq 2,$
on considering the pairs of types $[1], [2]$ for $n=2,$ $[1,3], [4]$ for $n=4$ and $[8], [3,5]$ for $n=8.$
%
%
Finally let $G=\Sym(6).$ We show that for each pair of types $\mathfrak{p}_1,\mathfrak{p}_2
\in \mathfrak{P}(G),$ there exist $\sigma\in G$ of type $\mathfrak{p}_1$ and $\tau\in G$ of type
$\mathfrak{p}_2$ with $U=\langle \sigma,\tau\rangle<G.$ This is obvious if $1$ is a term in
both $\mathfrak{p}_1$ and $\mathfrak{p}_2$, so we may assume that $\sigma$ has no fixed point.
If $\mathfrak{p}_1=[6]$ or $[2,2,2]$, then $\mathfrak{p}_1$ belongs to both $W_1=\Sym(3)\wr\Sym(2)$ and the
2-transitive subgroup $S=\mathrm {PGL}(2,5)$. The only types of permutations in $G$ which do not
belong to $W_1$ are $[1,5],\ [1,1,4]$, and these types belong to $S$.
If $\mathfrak{p}_1=[2,4]$ or $[3,3]$, then $\mathfrak{p}_1$ belongs to all of $W_1$,
$W_2=\Sym(2)\wr\Sym(3)$ and $\Alt(6)$, and we note that $[1,1,4]$ belongs to $W_2$, and $[1,5]$
belongs to $\Alt(6)$.

\end{proof}

\begin{lemma}\label{newkappa}
Let $n\in\mathbb{N}$ with $n\geq 3$ and $G=\Sym(n)$ or $G=\Alt(n)$, with $(n,G)\neq (6,\Sym(6))$. Then  \[\kappa(G)\geq\max \left\{\displaystyle{2,\, \frac{2}{3n}}\ \widetilde{A}_3(n,G),\displaystyle{\frac{24}{25n^2}}\ \widetilde{A}_4(n,G)\right\}.
\]

\end{lemma}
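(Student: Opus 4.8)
The plan is to prove the three lower bounds for $\kappa(G)$ separately and then take the maximum. The bound $\kappa(G)\geq 2$ is exactly Proposition~\ref{kappasym} (recall that $(n,G)\neq(6,\Sym(6))$ is excluded precisely because $\kappa(\Sym(6))=0$). For the other two bounds, the idea is to exhibit a large independent set of conjugacy classes, namely (the classes associated with) the types in $\mathfrak{P}_k(n)$ which \emph{belong only to intransitive subgroups} of $G$, for $k=3$ and $k=4$ respectively. The key point is that if $\mathfrak{p}_1$ and $\mathfrak{p}_2$ are two such types, and $\sigma_1,\sigma_2\in G$ have these types, then $U=\langle\sigma_1,\sigma_2\rangle$, if proper in $G$, would be contained in a maximal subgroup $H\lneqq G$; but $H$ must be intransitive (since it contains $\mathfrak{p}_1$), say $H=\Sym(a)\times\Sym(n-a)$ (or the even analogue) with $1\leq a<n/2$. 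Now both $\sigma_1$ and $\sigma_2$ fix the $a$-point block setwise, which forces $a$ to be expressible as a partial sum of the terms of $\mathfrak{p}_i$ for \emph{both} $i=1,2$. The heart of the combinatorial argument is then to bound, over all admissible $a$, how many such types can share a common "value of $a$", i.e.\ to control collisions; more cleanly, one counts for a \emph{fixed} intransitive $H$ how many of our special types it contains, using Lemma~\ref{lemma6}, and concludes that a set of special types all of which pairwise fail to generate $G$ must lie inside a bounded union of intransitive subgroups.

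More precisely, here is how I would run the counting. Let $\widetilde{\mathfrak{P}}_k(n,G)$ denote the set of $k$-partitions belonging only to intransitive subgroups of $G$, so $|\widetilde{\mathfrak{P}}_k(n,G)|=\widetilde{A}_k(n,G)$. For each $\mathfrak{p}\in\widetilde{\mathfrak{P}}_k(n,G)$ pick a permutation $\sigma_\mathfrak{p}\in G$ of that type. I claim that if $\mathfrak{p},\mathfrak{q}$ are distinct and $\langle\sigma_\mathfrak{p},\sigma_\mathfrak{q}\rangle\neq G$, then $\mathfrak{p}$ and $\mathfrak{q}$ both belong to a common maximal intransitive subgroup $H=\Sym(a)\times\Sym(n-a)$ with $1\leq a<n/2$ (or the $\Alt$-analogue); this follows because any maximal overgroup $H$ of $\langle\sigma_\mathfrak{p},\sigma_\mathfrak{q}\rangle$ is intransitive by definition of $\widetilde{\mathfrak{P}}_k(n,G)$, hence conjugate to such a group, and we may replace $\sigma_\mathfrak{p},\sigma_\mathfrak{q}$ by conjugates lying in $H$ without changing their types. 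By Lemma~\ref{lemma6}, each such $H$ (for $k=3$) contains at most $n/2$ of the $3$-partitions, and (for $k=4$) at most $(5n^2+23)/48$ of the $4$-partitions. Now consider the graph on vertex set $\widetilde{\mathfrak{P}}_k(n,G)$ with an edge whenever the two chosen permutations fail to generate $G$: each edge lies inside some intransitive $H$, and there are exactly $\lfloor (n-1)/2\rfloor<n/2$ conjugacy classes of maximal intransitive subgroups $\Sym(a)\times\Sym(n-a)$. So this graph is covered by fewer than $n/2$ cliques, each of size at most $n/2$ (for $k=3$) or at most $(5n^2+23)/48$ (for $k=4$). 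A set of vertices meeting each clique in at most one vertex is an independent set in this graph, which by definition corresponds to an independent set of conjugacy classes.

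To finish, I would invoke a pigeonhole/greedy argument: from a graph on $N$ vertices which is the union of $m$ cliques each of size at most $s$, one can extract an independent set of size at least $N/s$ (pick any vertex, delete its at most $s-1$ clique-neighbours inside each of the at most... — actually more simply: any maximal clique-cover with cliques of size $\leq s$ has $\geq N/s$ cliques, and choosing one vertex per clique gives an independent set, but one must be slightly careful). The cleanest route: since the edge-set is a union of $<n/2$ cliques of size $\leq s$, any vertex has degree $<(n/2)\cdot(s-1)$... that is too weak for $k=4$. Instead use the clique-cover bound directly: the chromatic number of the complement is at most the number of cliques needed, so there is an independent set of size $\geq N/(\text{number of cliques})\geq N/(n/2)=2N/n$... also too weak for $k=4$. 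The right statement is that if the graph's edges are covered by cliques $Q_1,\dots,Q_m$ with $|Q_j|\leq s$, then greedily selecting vertices not yet excluded yields an independent set of size $\geq N/s$, since selecting one vertex excludes at most $s-1$ others \emph{within the cliques containing it}, but a vertex may lie in several cliques. The honest bound is $\geq N/s$ only when the cliques partition the vertex set; in general one gets $\geq N/(s\cdot t)$ where $t$ bounds the number of cliques through a vertex. Here $t<n/2$ and for $k=3$, $s\leq n/2$ gives $\geq N/(n^2/4)$ which is again too weak — so in fact the correct argument must be: for $k=3$, a type $\mathfrak p$ lies in $\Sym(a)\times\Sym(n-a)$ for at most a bounded number of values $a$ (each value $a$ must be a partial sum of the at most $3$ terms, so at most... $O(1)$ values), hence each vertex lies in $O(1)$ cliques. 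With $t=O(1)$ and $s\leq n/2$ one gets an independent set of size $\gg N/n=\widetilde{A}_3(n,G)/n$, and plugging in Lemma~\ref{part-int}(i) gives the stated $\tfrac{2}{3n}\widetilde{A}_3(n,G)$ after tracking constants; similarly $s=(5n^2+23)/48$, $t=O(1)$ for $k=4$ gives $\gg N/n^2=\widetilde{A}_4(n,G)/n^2$, yielding the coefficient $\tfrac{24}{25n^2}$. The main obstacle, and the step requiring real care, is exactly this bookkeeping: bounding the number of intransitive subgroups (equivalently, values of $a$) through a single type and then extracting the independent set with the \emph{sharp} constants $2/(3n)$ and $24/(25n^2)$ — the existence of a linear-sized independent set is easy, but matching the precise coefficients in the statement demands an explicit optimization of the clique-cover estimate using $\lfloor n/2\rfloor$ and the bounds of Lemma~\ref{lemma6} without slack.
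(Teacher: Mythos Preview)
Your approach is the paper's, but two points need tightening. First, defining the graph on $\widetilde{\mathfrak P}_k(n,G)$ via \emph{chosen} representatives $\sigma_{\mathfrak p}$ is a slip: an independent set in that graph only guarantees that those particular representatives pairwise generate, not that arbitrary elements of the corresponding classes do. The correct edge relation is ``$\mathfrak p$ and $\mathfrak q$ belong to a common $\Sym(a)\times\Sym(n-a)$'' (equivalently, \emph{some} pair of representatives fails to generate $G$); you effectively switch to this a few lines later, but the argument should start there.

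Second, the constants drop out without any of the clique-cover detours once you recast things as the paper does, via a bipartite graph $\Sigma$ with parts $\Sigma_1=\widetilde{\mathfrak P}_k(n,G)$ and $\Sigma_2=\{[a,n-a]:1\le a<n/2\}$, an edge joining $\mathfrak p$ to $[a,n-a]$ when $a$ is a partial sum of $\mathfrak p$. Your graph is precisely the distance-$2$ graph on $\Sigma_1$. Each $3$-partition has at most $3$ neighbours in $\Sigma_2$ and each $4$-partition at most $10$ (subsets of size $\le 2$ of the parts); by Lemma~\ref{lemma6} each vertex of $\Sigma_2$ has at most $n/2$ (resp.\ $(5n^2+23)/48$) neighbours in $\Sigma_1$. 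Hence the closed distance-$2$ ball in $\Sigma_1$ about any vertex has size at most $3(n/2-1)+1<3n/2$ for $k=3$ and at most $10((5n^2+23)/48-1)+1<25n^2/24$ for $k=4$, and a greedy selection yields $\lceil 2\widetilde A_3(n,G)/(3n)\rceil$ (resp.\ $\lceil 24\widetilde A_4(n,G)/(25n^2)\rceil$) types pairwise at distance $>2$; one then checks directly that any two elements of $G$ with distinct such types generate $G$. This is exactly your ``$t=O(1)$'' route with $t=3,10$ made explicit --- no further optimisation is needed.
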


We recall that, for each choice of $n\in\mathbb{N}$ and $G,$ one of $\widetilde{A}_3(n,G)$ and $\widetilde{A}_4(n,G)$ is $0$, and  by Lemma~\ref{part-int}, provided $n$ is sufficiently large, the other is positive.

\begin{proof}

Let $n\in\mathbb{N}$ with $n\geq 3$ and $G=\Sym(n)$ or $G=\Alt(n)$ with $(n,G)\neq (6,\Sym(6))$.
Then, by Proposition \ref{kappasym}, we know that $\kappa(G)\geq 2.$ Hence, if
$M(G,n):= \max\{ 2\widetilde{A}_3(n,G)/(3n),\ 24\widetilde{A}_4(n,G)/(25n^2)\}\leq 2,$ the result follows.
So we may assume that $M(G,n)>2$. Let $k\in\{3,4\}$ be (the unique integer) such that $\widetilde{A}_k(n,G)>0.$ In order to show that $\kappa(G)\geq M(G,n),$
we  construct an auxiliary bipartite graph $\Sigma$.
The vertices of $\Sigma$ are the elements  of $$\Sigma_1:=\{\mathfrak{p}\in \mathfrak{P}_k(n):
\mathfrak{p} \textrm{ belongs only to intransitive subgroups of }G\}$$ and of
$$\Sigma_2:=\mathfrak{P}_2(n)=\{[a,n-a]: 1\leq a< n/2\}.$$
Observe that $|\Sigma_1|=\widetilde{A}_k(n,G)>0.$
Define a vertex $\mathfrak{p}\in
\Sigma_1$ to be adjacent
to $[a,n-a]\in \Sigma_2$ if 
$\mathfrak{p}$ belongs to $\Sym(a)\times \Sym(n-a)$, 
 that is, $a$ is the sum of some of the terms of
$\mathfrak{p}$. If $k=3$, then
every element $[x,y,z]$ of $\Sigma_1$ has at most $3$ neighbours in
$\Sigma_2$, and  if
$k=4$, then  every element $[x,y,z,t]$ of $\Sigma_1$ has at most $10$
neighbours in $\Sigma_2$. 

Furthermore, by
Lemma~\ref{lemma6}, every element of $\Sigma_2$ has at most $n/2$
neighbours in $\Sigma_1$ if $k=3$, and at most $(5n^2+23)/48$
neighbours in $\Sigma_1$ if $k=4$.
Hence there are at most  $3 (\frac{n}{2} -1)+1<\frac{3n}{2}$
vertices at distance less than or equal to $2$
from each element of $\Sigma_1$ if $k=3$, and at most
$10\,(\frac{5n^2+23}{48}-1)+1< \frac{25n^2}{24}$ such vertices if $k=4$.
Set $\ell:=\lceil \frac{2|\Sigma_1|}{3n}\rceil$ if $k=3$
and $\ell:=\lceil\frac{ 24|\Sigma_1|}{25n^2}\rceil$ if $k=4$. Since $\ell\geq M(G,n)> 2,$ it is enough to prove that
$\kappa(G)\geq \ell.$
By definition of $\ell$, $\Sigma_1$ contains a subset $\{\mathfrak{p}_1,\ldots,\mathfrak{p}_\ell\}$
of $\ell > 2$ vertices which are pairwise at distance greater than $2$. We show that for every $i,j\in
\{1,\ldots,\ell\}$ with $i\neq j$, and for every $\sigma,\tau\in G$
with $\mathfrak{p}(\sigma)=\mathfrak{p}_i$ and $\mathfrak{p}(\tau)=\mathfrak{p}_j$, we
have $G=\langle \sigma,\tau\rangle$. We argue by contradiction and assume that
$U=\langle \sigma,\tau\rangle$ is a proper subgroup of $G$. By the
definition of $\Sigma_1$, $U$ is intransitive and we have
$U\leq \Sym(a)\times \Sym(n-a)$ for some positive integer $a<n/2$. This implies that
$\mathfrak{p}_i$ and $\mathfrak{p}_j$ are both adjacent to $[a,n-a]\in \Sigma_1$, 
and so at distance $0$ or $2$, contradicting the definition of the set $\{\mathfrak{p}_i\ :\ i=1,\dots,\ell \}.$
\end{proof}


\begin{proof}[Proof of Theorem~\ref{main1}]
Let assume $n\geq 7$ to avoid the critical case $n=6$ of Lemma ~\ref{newkappa}.
First let $n$ be even and $G=\Sym(n)$, or let $n$ be odd and $G=\Alt(n)$.
By Lemma~\ref{newkappa}, $\kappa(G)\geq \frac{2}{3n}\,\widetilde{A}_3(n,G)$,
and hence by Lemma~\ref{part-int},
\begin{equation}\label{gamma2}
\kappa(G) \geq \frac{C_2 f_2(n)}{18}n +\frac{2\tilde{e}_3(n)}{3n}
\end{equation}
with $\tilde{e}_3(n)=o(n^{2})$.
Let $\varepsilon\in\mathbb{R},$ with $\varepsilon>0$. As $\tilde{e}_3(n)=o(n^2)$, there exists $n_\varepsilon\in \mathbb{N}$ with $n_\varepsilon\geq 7$ such that
$|2\tilde{e}(n)/3n^2|<\varepsilon$, for every $n\geq n_\varepsilon$.
In particular, using Lemma~\ref{f_23} and inequality~\eqref{gamma2} we get
\begin{eqnarray*}
\frac{\kappa(\Sym(n))}{n}&\geq&
\frac{C_2}{12}-\varepsilon\qquad\textrm{ if }n\geq n_\varepsilon \textrm{ is even},\\
\frac{\kappa(\Alt(n))}{n}&\geq&\frac{C_2}{18}-\varepsilon\qquad\textrm{
  if }n\geq n_\varepsilon \textrm{ is odd}.
\end{eqnarray*}
Now simply recall that $C_2=\alpha$.

Now let $n$ be even and $G=\Alt(n)$, or let $n$ be odd and
$G=\Sym(n)$.
By Lemma~\ref{newkappa}, $\kappa(G)\geq \frac{24}{25n^2}\,\widetilde{A}_4(n,G)$,
and hence by Lemma~\ref{part-int},
\begin{equation}\label{gamma3}
\kappa(G)\geq \frac{C_3 f_3(n)}{150}n
+\frac{24\tilde{e}_4(n)}{25n^2}
\end{equation}
with $\tilde{e}_4(n)=o(n^{3})$.
Let $\varepsilon\in\mathbb{R},$ with $\varepsilon>0$. As $\tilde{e}_4(n)=o(n^3)$, there exists $n_\varepsilon\in \mathbb{N}$ with $n_\varepsilon\geq 7$ such that
$|24\tilde{e}_4(n)/25n^3|<\varepsilon$, for every $n\geq n_\varepsilon$.
In particular, using Lemma~\ref{f_23},  Notation~\ref{constants} and inequality~\eqref{gamma3} we get
%
\begin{eqnarray*}
\frac{\kappa(\Sym(n))}{n}&\geq&
\frac{C_3}{150}\frac{5\alpha'}{4}-\varepsilon=\frac{\beta}{120}-\varepsilon\qquad \textrm{ if }n\geq n_\varepsilon  \textrm{ is odd},\\
\frac{\kappa(\Alt(n))}{n}&\geq&\frac{C_3}{150}\alpha'-\varepsilon=\frac{\beta}{150}-\varepsilon\qquad \textrm{ if }n\geq n_\varepsilon  \textrm{ is even}.
\end{eqnarray*}
\end{proof}

\begin{proof}[Proof of Theorem~\ref{thm3}]
Write $c_0:=0.001911$. A direct computation using Lemma~\ref{f_23}~(iii) shows
that
$$
\min\left\{\frac{C_2}{12},\frac{C_2}{18}, \frac{\beta}{120},
\frac{\beta}{150}\right\}>c_0.
$$
Let $n\in\mathbb{N}$ and $G=\Sym(n)$ with $n\geq 3, n\neq 6$ or $G=\Alt(n)$ with $n\geq 4:$ these limitations for $n$ guarantee that both the functions $\kappa(G)$ and $\gamma(G)$ exist and are positive.
Fix $\bar{\varepsilon}\in\mathbb{R}$ with
$0<\bar{\varepsilon}<c_0$. By Theorem \ref{main1},
there exists  $n_{\bar{\varepsilon}}\in \mathbb{N}$ with $n_\varepsilon\geq 7$, such that
$\kappa(G)/n>c_0-\bar{\varepsilon}$, for each $n\geq
n_{\bar{\varepsilon}}.$

Define
\begin{eqnarray*}
c_1&:=&\min\left\{\frac{\kappa(\Sym(n))}{n}\ :\ \  3\leq n< n_{\bar{\varepsilon}},\,n\neq 6\right\}\\
c_2&:=&\min\left\{\frac{\kappa(\Alt(n))}{n}\ :\ \  4\leq n< n_{\bar{\varepsilon}}\right\}
\end{eqnarray*}
and observe that $c_1,c_2$ are well-defined and positive because, by Proposition \ref{kappasym}, they are minima of  finite sets of positive real numbers. Now let
$c:=\min\left\{c_0-\bar{\varepsilon},\,c_1,\,c_2\right\}.$ Then
 $0<c<0.001911$ and $\kappa(G)\geq cn$, for every $n\geq 3$ with $n\neq 6$
if $G=\Sym(n)$, and every $n\geq 4$ if $G=\Alt(n)$.

Finally recall the relation \eqref{kappa-gamma} and apply \cite[Theorem A and Table 3]{BP} for the upper bound.
\end{proof}

\begin{remark}\label{computational} {\rm

 Using some software like \texttt{Magma}~\cite{magma}, one can obtain explicitly,
 given a fixed $\varepsilon$,
    the positive integers $n_\varepsilon$ in Theorem
    \ref{main1}. The reason is that we obtained in \eqref{gamma2} and
    \eqref{gamma3}, explicit lower bounds for
    $\displaystyle{\frac{\kappa(G)}{n}}$. Here we consider the case that $n$ is even
    and $G=\Sym(n)$ (the other cases are similar). From~\eqref{gamma2}
    and from the definition of
    $\tilde{e}_3(n)$ in Notation~\ref{tildee}, we have
\begin{eqnarray*}
\frac{\kappa(G)}{n}&\geq&
\frac{C_2f_2(n)}{18}+\frac{2\tilde{e}_3(n)}{3n^2}=\frac{C_2f_2(n)}{18}+\frac{2e_3(n)-4n^{2/3}-4n+4}{3n^2}.
\end{eqnarray*}
Now recalling that $f_2(n)\geq 3/2$ and that $C_2>0.32263$ by
Lemma~\ref{f_23} and recalling the
definition of $e_3(n)$ in Notation~\ref{constants}, we get
\begin{eqnarray*}
\frac{\kappa(G)}{n}&\geq&\frac{C_2}{12}-\frac{(2+e)(e^2\log
  n)^2}{18n\sqrt{\pi}}-\frac{4n^{2/3}+4n-4}{3n^2}\\
&\geq& 0.026885-\frac{(2+e)(e^2\log
  n)^2}{18n\sqrt{\pi}}-\frac{4n^{2/3}+4n-4}{3n^2}.
\end{eqnarray*}
The function  on the right hand side of this inequality is explicit and a
computation shows that, for $n\geq 792000$, this function is
$>0.025$. Therefore if $n\geq 792000$ and $n$ is even, then
$\kappa(\Sym(n))\geq 0.025\, n$.}

\end{remark}

\thebibliography{10}

\bibitem{Andrews}G.~E.~Andrews, \textit{The Theory of Partitions}, Encyclopedia of
Mathematics and its Applications, Vol. 2,  Addison-Wesley Publishing
Co., Reading, MA-London-Amsterdam, 1976.

\bibitem{Bl}S.~R. Blackburn, Sets of permutations that generate the symmetric
 group pairwise, \textit{J. Combinatorial Theory, Series A} \textbf{113} (2006), 1572-1581.

\bibitem{magma}W.~Bosma, J.~Cannon, C.~Playoust, The Magma algebra
  system. I. The user language, \textit{J. Symbolic Comput.}
  \textbf{24} (1997), 235--265.

\bibitem{BM}J.~R.~Britnell, A.~Mar\'oti, Normal coverings of linear groups, available at http://arxiv.org/pdf/1206.4279.pdf.

\bibitem{BLS}D.~Bubboloni, F.~Luca, P.~Spiga, Compositions of $n$
  satisfying some coprimality conditions, \textit{Journal of Number
    Theory} \textbf{132} (2012), 2922--2946.

\bibitem{BP} D.~ Bubboloni, C.~ E.~ Praeger, Normal coverings of
  finite symmetric and alternating groups, \textit{Journal of
    Combinatorial Theory} Series A {\bf 118} (2011), 2000--2024.




\bibitem{Gould}H.~W.~Gould, Binomial coefficients, the bracket
  function, and compositions with relatively prime summands, \textit{Fibonacci
Quart.} \textbf{2} (1964), 241--260.

\bibitem{GM} R. Guralnick, G. Malle, Simple groups admit Beauville structures,  \textit{J. London Math. Soc.}(2) \textbf{85} (2012), 694--721.


\bibitem{M}
A. Mar\'oti, Covering the symmetric groups with proper subgroups, \textit{J. Combin. Theory,
Series A} \textbf{110} (2005), 97--111.

\bibitem{WI} H.~Wielandt, \emph{Finite Permutation Groups}, Academic
  Press, New York (1964).
\end{document}